\newcommand{\eg}{{\it e.g.}}  \newcommand{\ie}{{\it i.e.}}
 \newcommand{\reals}{{\mathbb R}}
\newcommand\plusvee{\mathrel{\ooalign{\lower.5ex
\hbox{$\scriptstyle\vee\mkern.5mu$}\cr\hidewidth\raise.450ex
\hbox{$\scriptstyle+$}\cr}}}
\newcommand{\argmin}{\operatornamewithlimits{argmin}}
\newcommand{\minimize}{\operatornamewithlimits{minimize}}
\newcommand{\maximize}{\operatornamewithlimits{maximize}}
\newcommand{\isdraft}[2]{\ifthenelse{\boolean{draft}}{#1}{#2}}
\newtheorem{theorem}{Theorem} 
\newtheorem{lemma}[theorem]{Lemma}
\newtheorem{proposition}[theorem]{Proposition}
\title{DCOOL-NET: Distributed cooperative localization for sensor
networks}
\author{Cl\'audia Soares*,~\IEEEmembership{Student Member,~IEEE,}
  Jo\~ao Xavier,~\IEEEmembership{Member,~IEEE,} and Jo\~ao
  Gomes,~\IEEEmembership{Member,~IEEE} \thanks{This work was partially
    supported by FCT, under projects PEst-OE/EEI/LA0009/2011,
    CMU-PT/SIA/0026/2009, PTDC/EEA-CRO/104243/2008, and Grant
    SFRH/BD/72521/2010. The authors are with the Institute for Systems
    and Robotics (ISR), Instituto Superior T\'ecnico, Technical
    University of Lisbon, 1049-001 Lisboa, Portugal (e-mail:
    csoares@isr.ist.utl.pt; jxavier@isr.ist.utl.pt;
    jpg@isr.ist.utl.pt).}}
\begin{document} 

\maketitle

\begin{abstract}
  We present DCOOL-NET, a scalable distributed in-network algorithm
  for sensor network localization based on noisy range measurements.
  DCOOL-NET operates by parallel, collaborative message passing
  between single-hop neighbor sensors, and involves simple
  computations at each node. It stems from an application of the
  majorization-minimization (MM) framework to the nonconvex
  optimization problem at hand, and capitalizes on a novel convex
  majorizer.  The proposed majorizer is endowed with several desirable
  properties and represents a key contribution of this work. It is a
  more accurate match to the underlying nonconvex cost function than
  popular MM quadratic majorizers, and is readily amenable to
  distributed minimization via the alternating direction method of
  multipliers (ADMM).  Moreover, it allows for low-complexity, fast
  Nesterov gradient methods to tackle the ADMM subproblems induced at
  each node.  Computer simulations show that DCOOL-NET achieves
  comparable or better sensor position accuracies than a state-of-art
  method which, furthermore, is not parallel.
\end{abstract}

\begin{IEEEkeywords}
  Distributed algorithms, majorization-minimization, non-convex
  optimization, majorizing function, distributed iterative sensor
  localization, sensor networks.
\end{IEEEkeywords}

\begin{center} \bfseries EDICS Category: SEN-DIST SEN-COLB \end{center}
\IEEEpeerreviewmaketitle

\section{Introduction}
\label{sec:introduction}

Applications of sensor networks in environmental and infrastructure
monitoring, surveillance, or healthcare (\eg, body area networks,
infrastructure area networks, as well as staff and equipment
localization~\cite{lai2011}), typically rely on known sensor nodes'
positions, even if the main goal of the network is not localization. A
sensor network usually comprises a large set of miniature, low cost, low
power autonomous sensor nodes. In this scenario it is generally
unsuitable or even impossible to accurately deploy all sensor nodes in
a predefined location within the network monitored area. GPS is also
discarded as an option for indoor applications or due to cost and
energy consumption constraints.

\subsubsection*{Centralized methods}
\label{sec:related-work}

%centralized sensor network localization 

There is a considerable body of work on centralized sensor network
localization. These algorithms entail a central or fusion processing
node, to which all sensor nodes communicate their range
measurements. The energy spent on communications can degrade the
network operation lifetime, since centralized architectures are prone
to data traffic bottlenecks close to the central node. Resilience to
failure, security and privacy issues are, also, not naturally
accounted for by the centralized architecture. Moreover, as the number
of nodes in the network grows, the problem to be solved at the central
node becomes increasingly complex, thus raising scalability
concerns. Focusing on recent work, several different approaches are
available, such as~\cite{KellerGur2011}, where sensor network
localization is formulated as a regression problem over adaptive
bases, or~\cite{DestinoAbreu2011}, where the strategy is to perform
successive minimizations of a weighted least squares cost function
convolved with a Gaussian kernel of decreasing variance. Another
successfully pursued approach is to perform semi-definite (SDP) or
second order cone (SOCP) relaxations to the original non-convex
problem~\cite{OguzGomesXavierOliveira2011,BiswasLiangTohYeWang2006}.
In~\cite{OguzGomesXavierOliveira2011} and~\cite{KorkmazVeen2009} the
majorization-minimization (MM) framework was used with quadratic cost
functions to derive centralized approaches to the sensor network
localization problem.
% maybe include
Finally, another widely used methodology relies on multidimensional scaling (MDS),
where the sensor network localization problem is posed as a
least-squares problem, as
in~\cite{ShangRumiZhangFromherz2004,CostaPatwariHero2006}. %WeiWanChenYe2008

%distributed snl

\subsubsection*{Distributed methods}
\label{sec:related-work-d}

Distributed approaches for sensor network localization, requiring no
central or fusion node, have been less frequent, despite the more
suited nature of this computational paradigm to the problem at hand
and the discussed advantages in applications.  
%Nevertheless, this is,in fact, a task with a smaller 
%amount of available tools.
The work in~\cite{SrirangarajanTewfikLuo2008} proposes a parallel
distributed algorithm. However, the sensor network localization
problem 
adopts
%is cast outside of the maximum likelihood (ML) scope, using 
a discrepancy function between squared distances which, unlike the
ones in maximum likelihood (ML) methods, is known to greatly amplify
measurement errors and outliers.  The convergence properties of the
algorithm are not studied theoretically.  The work
in~\cite{ChanSo2009} also considers network localization outside a ML
framework. The approach proposed in~\cite{ChanSo2009} is not parallel,
operating sequentially through layers of nodes: neighbors of anchors
estimate their positions and become anchors themselves, making it
possible in turn for their neighbors to estimate their positions, and
so on. Position estimation is based on planar geometry-based
heuristics.  In~\cite{KhanKarMoura2010}, the authors propose an
algorithm with assured asymptotic convergence, but
the solution is computationally complex since a triangulation set must
be calculated, and matrix operations are pervasive. Furthermore, in
order to attain good accuracy, a large number of range measurement
rounds must be acquired, one per iteration of the algorithm, thus
increasing energy expenditure. On the other hand, the algorithm
presented in~\cite{ShiHeChenJiang2010} and based on the non-linear
Gauss Seidel framework, has a pleasingly simple implementation,
combined with the convergence guarantees inherited from the non-linear
Gauss Seidel framework. Notwithstanding, this algorithm is sequential,
\ie, nodes perform their calculations in turn, not in a parallel
fashion. This entails the existence of a network-wide coordination
procedure to precompute the processing schedule upon startup, or
whenever a node joins or leaves the network.

\subsubsection*{Contribution}
\label{sec:contributions} In the present work, we set forth a
distributed algorithm, termed DCOOL-NET, for sensor network
localization, in which all nodes work in parallel and collaborate only
with single-hop neighbors.  DCOOL-NET is obtained by casting the
localization problem in a principled ML framework and following a
majorization-minorization approach~\cite{HunterLange2004} to
tackle the resulting nonconvex optimization problem.

MM paradigm operates on a novel convex majorization function which we
tailored to match the particular nonconvex structure in the cost
function. The proposed majorizer is a much tighter approximation than
the popular MM quadratic majorizers, and enjoys several
optimization-friendly properties.  It is readily amenable to
distributed minimization via the alternating direction method of
multipliers (ADMM), with convergence guarantees.  Moreover, by
capitalizing on its special properties, we show how to setup
low-complexity algorithms based on the fast-gradient Nesterov methods
to address the ADMM subproblems at each node.

As in other existing distributed iterative approaches, there is no
theoretical guarantee \textit{a priori} that DCOOL-NET will find the
global minimum of the nonconvex cost function for any given
initialization (even in the more favorable centralized setting, we are
not aware of theoretical proofs establishing global convergence of
existing methods for
tackling~\eqref{eq:snlOptimizationProblem}). However, computer
simulations show that our approach yields comparable or better sensor
position accuracies, for the same initialization, than the
state-of-art method in~\cite{ShiHeChenJiang2010} which, like
DCOOL-NET, is easily implementable and ensures descent of the cost
function at each iteration, but, contrary to DCOOL-NET, it is not a
parallel method.

\section{Problem statement}
\label{sec:problem-statement} The sensor network is represented as an
undirected connected graph $\mathcal{G} =
(\mathcal{V},\mathcal{E})$. The node set $\mathcal{V} = \{1,2, \dots,
n\}$ denotes the sensors with unknown positions. There is an edge $i
\sim j \in {\mathcal E}$ between sensors $i$ and $j$ if and only if a
noisy range measurement between nodes $i$ and $j$ is available at both
of them and nodes $i$ and $j$ can communicate with each other.  The
set of sensors with known positions, hereafter called anchors, is
denoted by ${\mathcal A} = \{ 1, \ldots, m \}$. For each sensor $i \in
{\mathcal V}$, we let ${\mathcal A}_i \subset {\mathcal A}$ be the
subset of anchors (if any) relative to which node $i$ also possesses a
noisy range measurement.

Let $\reals^p$ be the space of interest ($p=2$ for planar networks,
and $p=3$ otherwise).  We denote by $x_i \in \reals^p$ the position of
sensor $i$, and by $d_{ij}$ the noisy range measurement between
sensors $i$ and $j$, available at both $i$ and
$j$. Following~\cite{ShiHeChenJiang2010}, we assume $d_{ij} = d_{ji}$\footnote{This entails no loss of generality: it is readily seen that, if $d_{ij} \neq d_{ji}$, then it suffices to replace $d_{ij}  \leftarrow (d_{ij} + d_{ji})/2$ and $d_{ji} \leftarrow (d_{ij} + d_{ji})/2$ in the forthcoming optimization problem~\eqref{eq:snlOptimizationProblem}.}. Anchor positions
are denoted by $a_{k} \in \reals^{p}$. We let $r_{ik}$ denote the
noisy range measurement between sensor $i$ and anchor $k$, available
at sensor $i$.

The distributed network localization problem addressed in this work
consists in estimating the sensors' positions $x = \{ x_i\, : \, i \in
\mathcal{V} \}$, from the available measurements $\{ d_{ij} \, : \, i
\sim j \} \cup \{ r_{ik} \, : \, i \in {\mathcal V}, k \in {\mathcal
A}_i \}$, through collaborative message passing between neighboring
sensors in the communication graph~${\mathcal G}$.

Under the assumption of zero-mean, independent and
identically-distributed, additive Gaussian measurement noise, the
maximum likelihood estimator for the sensor positions is the solution
of the optimization problem
\begin{equation}
  \label{eq:snlOptimizationProblem} 
  \minimize_{x} f(x),
\end{equation} 
where
\begin{equation*} 
  f(x) = \sum _{i \sim j} (\|x_{i} - x_{j}\| - d_{ij})^2 + \sum_{i}
  \sum_{k \in \mathcal{A}_{i}} (\|x_{i}-a_{k}\| - r_{ik})^2.
\end{equation*} 
Problem~\eqref{eq:snlOptimizationProblem} is non-convex and difficult
to solve.  Even in the centralized setting (\ie, all measurements are
available at a central node) currently available iterative techniques
don't claim convergence to the global optimum. Also, even with
noiseless measurements, multiple solutions might exist due to
ambiguities in the network topology itself. For noiseless unambiguous
topologies, also called \emph{localizable} networks, the measurement
noise may create ambiguities. However, recent studies
like~\cite{AndersonShamesMaoFidan2010} provide theoretical guarantees
to confidently consider networks which are approximately localizable,
\ie, whose localizability resists to a reasonable amount of
measurement noise.

\section{DCOOL-NET}
\label{sec:conv-major-funct}

We propose a distributed algorithm, termed DCOOL-NET, to tackle
problem~\eqref{eq:snlOptimizationProblem}.  Starting from an
initialization $x[0]$ for the unknown sensors' positions $x$,
DCOOL-NET generates a sequence of iterates $\left( x[l] \right)_{l
\geq 1}$ which, hopefully, converges to a solution
of~\eqref{eq:snlOptimizationProblem}.  Conceptually, DCOOL-NET is
obtained by applying the majorization minimization (MM)
framework\cite{HunterLange2004} to~\eqref{eq:snlOptimizationProblem}:
at each iteration~$l$, a majorizer of $f$, tight at the current
iterate $x[l]$, is minimized to obtain the next iterate $x[l+1]$.  The
algorithm is outlined in Algorithm~\ref{alg:mm} for a fixed number of
iterations~$L$.
\begin{algorithm}
  \caption{DCOOL-NET}
  \label{alg:mm}
  \begin{algorithmic}[1] 
    \REQUIRE $x[0]$ 
    \ENSURE $x[L]$ 
    \FOR{$l=0$ \TO $L-1$} 
    \STATE $x[l+1]= \argmin_{x} F(x \, | \,
    x[l])$ \label{alg:mm-min}
    \ENDFOR 
    \RETURN $x[L]$
  \end{algorithmic}
\end{algorithm} 
Here, $F( \cdot\, | \, x[l])$ denotes a majorizer of
$f$ (\ie, $f(x) \leq F( x \, | \, x[l] )$ for all $x$) which is tight
at $x[l]$ (\ie, $f( x[l] ) = F( x[l]\, |\, x[l] )$).  The majorizer is
detailed in the next section. Note that $f( x[l+1] ) \leq f( x[l] )$,
that is, $f$ is monotonically decreasing along iterations, an
important property of the MM framework.

DCOOL-NET is a distributed algorithm because, as we shall see, the
minimization of the upper-bounds $F$ can be achieved in a distributed
manner.

\section{Majorization function}
\label{sec:major-funct}

Commonly, MM techniques resort to quadratic majorizers which, albeit
easy to minimize, show a considerable mismatch with most cost
functions (in particular, with $f$
in~\eqref{eq:snlOptimizationProblem}). To overcome this problem, we
introduce a key novel majorizer. It is specifically adapted to~$f$,
tighter than a quadratic, convex, and easily optimizable.

Before proceeding it is useful to
rewrite~\eqref{eq:snlOptimizationProblem} as
\begin{equation*} 
  f(x) = \sum _{i \sim j} f_{ij}(x_{i},x_{j}) +
  \sum_{i} \sum_{k \in \mathcal{A}_{i}} f_{ik}(x_{i}),
\end{equation*} 
where $f_{ij}(x_{i},x_{j}) = \phi_{d_{ij}}(x_{i}-x_{j})$ and
$f_{ik}(x_{i}) = \phi_{r_{ik}}(x_{i}-a_{k})$, both defined in terms of
the basic building block
\begin{equation}
  \label{eq:original-cost-generic} 
  \phi_{d}(u) = (\|u\| - d)^{2}.
\end{equation}

\subsection{Majorization function
for~\eqref{eq:original-cost-generic}}
\label{sec:major-funct-one}

Let $v \in {\mathbb R}^p$ be given, assumed nonzero. We provide a
majorizer $\Phi_d( \cdot\, |\, v )$ for $\phi_d$ in~\eqref{eq:original-cost-generic} which is tight at
$v$, i.e., $\phi_d (u) \leq \Phi_d( u\, | \, v )$ for all $u$ and
$\phi_d( v ) = \Phi_d( v \, |\, v )$.

\begin{proposition}
  \label{prop:majorization-properties} 
  Let
  \begin{equation}
  \label{eq:majorization-simple} 
  \Phi_{d}(u | v) = \max\left\{g_{d}(u), h_{d}(v^{\top} u / \left\| v
    \right\|-d) \right\},
\end{equation} 
where
\begin{equation}
  \label{eq:squarepos} 
  g_{d}(u) = \left(\|u\| - d\right)_{+}^{2},
\end{equation} 
$(r)_+^2 = \left(\max\{0,r\}\right)^2$, and
\begin{equation}
  \label{eq:huber} 
  h_{R}(r) = \left \{
    \begin{array}{ll} 
      2 R | r | -R^2 & \text{if } | r | \geq R\\ 
      r^2 & \text{if } | r | < R,
    \end{array}
  \right.
\end{equation}
is the Huber function of parameter $R$.  Then, the function
$\Phi_{d}( \cdot\, |\, v)$ is convex, is tight at $v$, and majorizes
$\phi_d$.
\end{proposition}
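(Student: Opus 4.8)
The plan is to verify the three properties asserted in the proposition separately: convexity of $\Phi_d(\cdot\,|\,v)$, tightness at $v$, and the majorization inequality.

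\emph{Convexity.} I would observe that $\Phi_d(\cdot\,|\,v)$ is a pointwise maximum of two convex functions. The term $g_d$ in~\eqref{eq:squarepos} is the composition of the scalar map $s \mapsto (s)_+^2$ — which is convex and nondecreasing on $\reals$ — with the convex function $u \mapsto \|u\| - d$, hence convex. The term $u \mapsto h_d(v^\top u/\|v\| - d)$ is the composition of the Huber function $h_d$ in~\eqref{eq:huber}, a standard example of a convex function ($C^1$, piecewise quadratic/linear, with nondecreasing derivative), with the affine map $u \mapsto v^\top u/\|v\| - d$ (well-defined since $v \ne 0$ forces $\|v\| > 0$). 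A maximum of convex functions is convex.

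\emph{Tightness.} I would simply evaluate at $u = v$. Since $v^\top v/\|v\| = \|v\|$, the Huber argument reduces to $\|v\| - d$, so $\Phi_d(v\,|\,v) = \max\{(\|v\| - d)_+^2,\, h_d(\|v\| - d)\}$. A short case split on the sign of $\|v\| - d$ and on which branch of $h_d$ is active — using the elementary fact $r^2 - (2d|r| - d^2) = (|r| - d)^2 \ge 0$ to bound the linear branch — shows this maximum equals $(\|v\| - d)^2 = \phi_d(v)$. The hypothesis $v \ne 0$ enters here: it guarantees $d - \|v\| < d$ whenever $\|v\| < d$, so the Huber term stays in its quadratic branch in that case.

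\emph{Majorization.} I would fix an arbitrary $u$ and set $t := v^\top u/\|v\|$, so that $|t| \le \|u\|$ by Cauchy--Schwarz. If $\|u\| \ge d$, then $\phi_d(u) = (\|u\| - d)^2 = g_d(u) \le \Phi_d(u\,|\,v)$ and there is nothing more to do. If $\|u\| < d$, then $g_d(u) = 0$ and $\phi_d(u) = (d - \|u\|)^2$, so it suffices to show $(d - \|u\|)^2 \le h_d(t - d)$. Here $t \le \|u\| < d$, so $|t - d| = d - t > 0$. When $t \ge 0$, the Huber term is in its quadratic branch, $h_d(t - d) = (d - t)^2$, and $(d - \|u\|)^2 \le (d - t)^2$ follows from $0 \le d - \|u\| \le d - t$. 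When $t < 0$, the Huber term is in its linear branch, $h_d(t - d) = 2d(d - t) - d^2$, and the desired inequality $(d - \|u\|)^2 \le 2d(d - t) - d^2$ rearranges to $t \le \|u\| - \|u\|^2/(2d)$; this holds because its left-hand side is $\le 0$ while its right-hand side equals $\|u\|(1 - \|u\|/(2d)) \ge 0$, again using $\|u\| < d$.

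I expect the main obstacle to be exactly this last sub-case, the linear branch of the Huber term when $\|u\| < d$: it is the only place where $|t| \le \|u\|$ does not by itself close the argument and must be combined with the region constraint $\|u\| < d$ to absorb the quadratic correction $\|u\|^2/(2d)$. The convexity argument and the tightness evaluation are otherwise routine.
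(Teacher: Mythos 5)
Your proposal is correct and follows essentially the same route as the paper's proof: convexity as a pointwise maximum of two convex compositions, tightness by direct evaluation at $u=v$ with a case split, and majorization by separating $\|u\|\geq d$ from $\|u\|<d$ and then splitting on the Huber branch. The only cosmetic difference is in the linear-branch sub-case, where the paper bounds $h_d$ below by $d^2$ directly from the case condition $|R\langle \hat v,\hat u\rangle - d| > d$ and then uses $d^2 \geq (d-R)^2$, whereas you rearrange the inequality algebraically; both close the argument.
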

\begin{proof} 
  See Appendix~\ref{sec:proof-theor-thcvxmaj}.
\end{proof}

The tightness of the proposed majorization function is illustrated in
Fig.~\ref{fig:cvxapprox}, in which we depict, for an one-dimensional
argument~$u$, $d = 0.5$ and $v = 0.1$: the nonconvex cost function
in~\eqref{eq:original-cost-generic}, the proposed majorizer
in~\eqref{eq:majorization-simple} and a quadratic majorizer
\begin{equation}
  \label{eq:quadratic-majorizer}
  Q_{d}( u | v ) = \|u\|^{2} + d^{2} - 2d \frac{v^{\top}u}{\|v\|},
\end{equation}
obtained through routine manipulations
of~\eqref{eq:original-cost-generic}, e.g., expanding the square and
linearizing $\left\| u \right\|$ at $v$, which is common in MM
approaches (c.f. \cite{OguzGomesXavierOliveira2011,KorkmazVeen2009}
for quadratic majorizers applied to the sensor network localization
problem and~\cite{ForeroGiannakis2012} for an application in robust
MDS).  Clearly, the proposed convex majorizer is a better
approximation to the nonconvex cost function\footnote{The fact that
  both majorizers have coincident minimum is an artifact of this toy
  example, and does not hold in general.}.  As an expected corollary,
it also outperforms in accuracy the quadratic majorizer when embedded
in the MM framework, as shown in the experimental results of
Sec.~\ref{sec:experimental-results}.

\begin{figure}[!t] 
  \centering
  \includegraphics[width=0.49\textwidth]{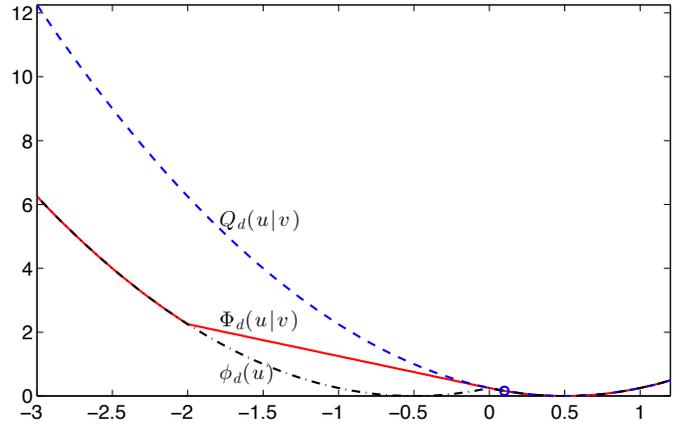}
  \caption{Nonconvex cost function (black, dash point)
    in~\eqref{eq:original-cost-generic} against the proposed majorizer
    (red, solid) in~\eqref{eq:majorization-simple} and a vanilla quadratic
    majorizer (blue, dashed) in~\eqref{eq:quadratic-majorizer}, for $d =
    0.5$ and $v = 0.1$. The proposed convex majorizer is a much more accurate
    approximation.}
  \label{fig:cvxapprox}
\end{figure}

\subsection{Majorization function for~\eqref{eq:snlOptimizationProblem}}
\label{sec:major-funct-sum}

Now, for given $x[l]$, consider the function
\begin{equation}
  \label{eq:cvxapproximationFull} 
  F( x \, | \, x[l] ) = \sum _{i \sim j} F_{ij}(x_i, x_j) + \sum_{i}
  \sum_{k \in \mathcal{A}_{i}} F_{ik}(x_i),
\end{equation} 
where
\begin{IEEEeqnarray}{rCl}
  \label{eq:maj-functions-expr} 
  F_{ij}(x_i, x_j) &=& \Phi_{d_{ij}}(x_{i}-x_{j}\, |\, x_{i}[l] - x_{j}[l])
\end{IEEEeqnarray} 
and
\begin{IEEEeqnarray}{rCl}
  \label{eq:maj-functions-s2a} 
  F_{ik}(x_{i}) &=& \Phi_{r_{ik}}(x_{i}-a_{k}\, |\, x_{i}[l] - a_{k}).
\end{IEEEeqnarray} 
Given Proposition~\ref{prop:majorization-properties}, it is clear that
it majorizes $f$ and is tight at $x[l]$. Moreover, it is convex as a
sum of convex functions.

\section{Distributed minimization}
\label{sec:distr-algor} 

At the $l$-th iteration of DCOOL-NET, the convex function
in~\eqref{eq:cvxapproximationFull} must be minimized.  We now show how
this optimization problem can be solved collaboratively by the network
in a distributed, parallel manner.

\subsection{Problem reformulation}
\label{sec:probl-reform}

In the distributed algorithm, the working node will operate on local
copies of the estimated positions of its neighbors and of itself.
So, it is convenient to introduce new
variables.  Let ${\mathcal V}_i = \{ j\, : \, j \sim i \}$ denote the
neighbors of sensor~$i$. We also define the closed neighborhood
$\overline{\mathcal V}_i = {\mathcal V}_i \cup \{ i \}$.  For each
$i$, we duplicate $x_i$ into new variables $y_{ji}$, $j \in
\overline{\mathcal V}_i$, and $z_{ik}$, $k \in {\mathcal A}_i$.
This choice of notation is not fortuitous: the first subscript reveals which physical node will store the variable, in our proposed implementation; thus, $x_i$ and $z_{ik}$ are stored at node~$i$, whereas $y_{ji}$ is managed by node~$j$.
 We
write the minimization of~\eqref{eq:cvxapproximationFull} as the
optimization problem
\begin{equation}
  \label{eq:admmproblem}
  \begin{array}{ll} 
    \minimize & F(y, z) \\ \mbox{subject to } & y_{ji}
    = x_i, \qquad j \in \overline{\mathcal{V}}_{i}\\ & z_{ik} = x_{i},
    \qquad k \in \mathcal{A}_{i},\\
  \end{array}
\end{equation} 
where $y = \{y_{ji} : i \in \mathcal{V}, j \in
\overline{\mathcal{V}}_{i}\}$, $z = \{z_{ik} : i \in \mathcal{V}, k
\in \mathcal{A}_{i}\}$, and 
\begin{IEEEeqnarray}{rC} 
  \isdraft{\label{eq:admmcost}}{\nonumber} 
  F(y, z) =& \sum_{i \sim j} \left(F_{ij}\left(y_{ii},y_{ij} \right) +
    F_{ij}\left(y_{ji},y_{jj} \right) \right) + \isdraft{}{\\
  \label{eq:admmcost} 
  &+} 2 \sum_{i} \sum_{k \in \mathcal{A}_{i}} F_{ik}\left(z_{ik}\right).
\end{IEEEeqnarray} 
In passing from~\eqref{eq:cvxapproximationFull}
to~\eqref{eq:admmproblem} we used the identity $F_{ij}(x_{i},x_{j})=
\frac{1}{2} F_{ij}\left(y_{ii}, y_{ij}\right) + \frac{1}{2}
F_{ij}\left(y_{ji}, y_{jj}\right)$, due to $y_{ji} = x_{i}$. Also, for
convenience, we have rescaled the objective by a factor of two.

\subsection{Algorithm derivation}
\label{sec:algor-deriv}

Problem~\eqref{eq:admmproblem} is in the form
\begin{equation}
  \label{eq:admmproblem-general}
  \begin{array}{ll} 
    \minimize & F(y,z) + G(x)\\ \mbox{subject to } & A(y,z) + Bx = 0
  \end{array}
\end{equation} 
where $F$ is the convex function in~\eqref{eq:admmcost}, $G$ is the
identically zero function, $A$ is the identity operator and $B$ is a
matrix whose rows belong to the set $\{-e_i^\top ,\: i \in
\mathcal{V}\}$, being $e_i$ the $i$th column of the identity matrix of
size $|\mathcal{V}|$. In the presence of a connected network $B$ is
full column rank, so the problem is suited for the Alternating
Direction Method of Multipliers (ADMM). See
\cite{BoydParikhChuPeleatoEckstein2011} and references therein for
more details on this method. See
also~\cite{SchizasRibeiroGiannakis2008,ZhuGiannakisCano2009,ForeroCanoGiannakis2010,BazerqueGiannakis2010,ErsegheZennaroAneseVangelista2011,MotaXavierAguiarPuschel}
for applications of ADMM in distributed optimization settings.

Let $\lambda_{ji}$ be the Lagrange multiplier associated with the
constraint $y_{ji} = x_i$ and $\lambda = \left \{\lambda_{ji} \right
\}$ the collection of all such multipliers. Similarly, let $\mu_{ik}$
be the Lagrange multiplier associated with the constraint $z_{ik} =
x_i$ and $\mu = \{ \mu_{ik} \}$.  The ADMM framework generates a
sequence $\left( y(t), z(t), x(t), \lambda(t), \mu(t) \right)_{t \geq
  1}$ such that
\begin{IEEEeqnarray}{lCr} 
  (y(t+1), z(t+1)) = \argmin_{y,z} \mathcal{L}_\rho\left( y, z, x(t), \lambda(t), \mu(t) \right)
  \isdraft{}{\nonumber \\ }
  \label{eq:admm-secondary-vars} \\ 
  x(t+1) = \argmin_x \mathcal{L}_\rho\left( y(t+1), z(t+1), x,
    \lambda(t), \mu(t) \right) 
  \isdraft{}{\nonumber \\ }
  \label{eq:admm-primary-vars} \\
  \label{eq:admm-dual-update} \lambda_{ji}(t+1) = \lambda_{ji}(t) +
  \rho (y_{ji}(t+1) - x_i(t+1))\\
  \label{eq:admm-dual-update2} 
  \mu_{ik}(t+1) = \mu_{ik}(t) + \rho (z_{ik}(t+1) - x_i(t+1)),
\end{IEEEeqnarray} 
where $\mathcal{L}_\rho$ is the augmented Lagrangian defined as
\begin{IEEEeqnarray}{lCr} 
  {\mathcal L}_\rho(y, z, x, \lambda, \mu) \isdraft{&=&}{=}
  F(y,z) + \sum_i \sum_{j \in \overline{\mathcal V}_i} \left (
    \rule{0ex}{2ex}\lambda_{ji}^\top (y_{ji} - x_i) +
    \isdraft{}{\right . \nonumber \\ \left .} \frac{\rho}{2} \left\|
      y_{ji} - x_i \right\|^2  \rule{0ex}{2ex} \right )+ 
  \isdraft{\nonumber \\ && }{}
  \sum_i \sum_{k \in {\mathcal A}_i} \left (   \mu_{ik}^\top
    (z_{ik} - x_i ) + \frac{\rho}{2} \left\| z_{ik} - x_i \right\|^2
     \right ). \isdraft{}{\nonumber \\} 
  \label{eq:auglagrangian}
\end{IEEEeqnarray}
Here, $\rho>0$ is a pre-chosen constant.

In our implementation, we let node $i$ store the variables $x_{i}$,
$y_{ij}$, $\lambda_{ij}$, $\lambda_{ji}$, for $j \in
\overline{\mathcal{V}}_{i}$ and $z_{ik}$, $\mu_{ik}$, for $k \in
\mathcal{A}_{i}$.  Note that a copy of $\lambda_{ij}$ is maintained at
both nodes~$i$ and~$j$ (this is to avoid extra communication steps).
For $t = 0$, we can set $\lambda(0)$ and $\mu(0)$ to a pre-chosen
constant (e.g., zero) at all nodes.  Also, we assume that, at the
beginning of the iterations (\ie, for $t = 0$) node~$i$ knows $x_j
(0)$ for $j \in {\mathcal V}_i$ (this can be accomplished, e.g., by
having each node $i$ communicating $x_i(0)$ to all its
neighbors). This property will be preserved for all $t \geq 1$ in our
algorithm, via communication steps.
  
We now show that the minimizations in~\eqref{eq:admm-secondary-vars}
and~\eqref{eq:admm-primary-vars} can be implemented in a distributed
manner and require low computational cost at each node.

%\subsubsection*{Convergence properties}
%\label{sec:conv-prop}

%A useful convergence result is established
%in~\cite[Th.~1]{MotaXavierAguiarPuschel2011proof}, on the basis of
%which we state the convergence of our method. Under the assumptions
%that the objective function in~\eqref{eq:admmproblem-general} is
%convex and the problem solvable, $x$ and $y$ lie on polyhedral sets
%[TODO], and matrices $A$ and $B$ in~\eqref{eq:admmproblem-general}
%are %full column rank, the theorem states that the sequence of the
%primal %terates $\{x(k), y(k)\}$ converges to a single limit point
%$\{x^{\star}, y^{\star}\}$, which
%solves~\eqref{eq:admmproblem-general}.

%At this point, we already know that~\eqref{eq:admmcost}, the cost
%function in~\eqref{eq:admmproblem-general}, is convex and that $A$
%and %$B$ are full column rank.  Further, the function
%in~\eqref{eq:cvxapproximationFull} is convex and coercive and, thus,
%it has at least one minimizer. As Problem~\eqref{eq:admmproblem} is
%just an equivalent to~\eqref{eq:cvxapproximationFull}, with variable
%replication, its rewrite in~\eqref{eq:admmproblem-general} is also
%solvable. In the other hand, the domain of $F$ is the set where all
%$x$ lie, which is, by assumption, a polyhedral set[TODO]. Thereby,
%all %conditions for the convergence as enunciated above are met.

\section{ADMM: Solving Problem~\eqref{eq:admm-secondary-vars}}
\label{sec:solv-probl-secondary-vars}

As shown in Appendix~\ref{sec:proof-B}, the augmented Lagrangian in~\eqref{eq:auglagrangian} can be written as
\begin{IEEEeqnarray}{rCl} 
  \isdraft{\label{eq:auglagrangian-v2} }{\nonumber}
  \mathcal{L}_\rho\left( y, z, x,\lambda, \mu \right) &=& \sum_{i}
  \left( \sum_{j \in \overline{\mathcal{V}}_{i}} {\mathcal L}_{ij}\left(y_{ii},y_{ij},
      x_{j},\lambda_{ij}\right) + \isdraft{}{\right.\\
  \label{eq:auglagrangian-v2} 
  &&\left . +}\sum_{k \in \mathcal{A}_{i}}
    {\mathcal L}_{ik}\left(z_{ik}, x_{i},\mu_{ik} \right) \right )
\end{IEEEeqnarray} 
where
\begin{IEEEeqnarray*}{rCl} 
  {\mathcal L}_{ij}\left(y_{ii},y_{ij},
    x_{j},\lambda_{ij}\right) &=& F_{ij}\left(y_{ii},y_{ij}\right) +
  \lambda_{ij}^\top\left(y_{ij} - x_j\right) +\isdraft{}{\\ 
  &&+}\frac{\rho}{2} \left\|y_{ij} - x_j\right\|^2
\end{IEEEeqnarray*} 
and
\begin{IEEEeqnarray*}{rCl} 
  {\mathcal L}_{ik}\left(z_{ik}, x_{i},\mu_{ik}\right) &=&
  2F_{ik}\left(z_{ik}\right) + \mu_{ik}^\top\left(z_{ik} - x_i\right) + \isdraft{}{\\ 
  &&+}\frac{\rho}{2} \left\|z_{ik} - x_i\right\|^2.
\end{IEEEeqnarray*} 
In~\eqref{eq:auglagrangian-v2} we let $F_{ii} \equiv 0$.  It is clear
from~\eqref{eq:auglagrangian-v2} that
Problem~\eqref{eq:admm-secondary-vars} decouples across sensors~$i \in
{\mathcal V}$, since we are optimizing only over $y$ and $z$. Further,
at each sensor $i$, it decouples into two types of subproblems: one
involving the variables $y_{ij}, j \in \overline {\mathcal V}_i$,
given by
\begin{IEEEeqnarray}{RC}
  \label{eq:secondary-s2s-nodei} 
  \minimize_{y_{ij}, \:j \in \overline{\mathcal{V}}_i } & \sum_{j \in
    \overline{\mathcal{V}}_{i}} {\mathcal L}_{ij}\left(y_{ii},y_{ij},
    x_{j},\lambda_{ij}\right),
\end{IEEEeqnarray} 
and into $| {\mathcal A}_i |$ subproblems of the form
\begin{IEEEeqnarray}{RC}
  \label{eq:secondary-s2a-nodei} 
  \minimize_{z_{ik}} & {\mathcal L}_{ik}\left(z_{ik}, x_{i},\mu_{ik}\right),
\end{IEEEeqnarray} 
involving the variable $z_{ik}$, $k \in \mathcal{A}_i$.  Note that
problems related with anchors are simpler, and, since there are
usually few anchors in a network, they do not occur frequently.

\subsection{Solving Problem~\eqref{eq:secondary-s2s-nodei}}
\label{sec:problem-nodes-in-s_i}

First, note that node~$i$ can indeed address
Problem~\eqref{eq:secondary-s2s-nodei} since all the data defining it
is available at node~$i$: it stores $\lambda_{ji}(t)$, $j \in
\overline {\mathcal V}_i$, and it knows $x_j(t)$ for all neighbors $j
\in {\mathcal V}_i$ (this holds trivially for $t= 0$ by construction,
and it is preserved by our approach, as shown ahead).

To alleviate notation we now suppress the indication of the working
node $i$, \ie, variable $y_{ij}$ is simply written as
$y_{j}$. Problem~\eqref{eq:secondary-s2s-nodei} can be written as
\begin{IEEEeqnarray}{rCl} 
  \isdraft{\label{eq:secondary-s2s-nodei-v2}}{\nonumber}
  \minimize_{y_j, \:j \in \overline{\mathcal{V}}_i } &\sum_{j \in
    \mathcal{V}_i}\left( F_{ij}(y_i, y_j) + \frac{\rho}{2} \|y_j - \gamma_{ij}\|^2\right) \isdraft{}{\\
    \label{eq:secondary-s2s-nodei-v2} 
&}+\frac{\rho}{2} \|y_i - \gamma_{ii}\|^2,
\end{IEEEeqnarray} 
where $\gamma_{ij} = x_j - \frac{\lambda_{ij}}{\rho}$.

We make the crucial observation that, for fixed $y_i$, the problem is
separable in the remaining variables $y_j$, $j \in
\mathcal{V}_i$. This motivates
writing~\eqref{eq:secondary-s2s-nodei-v2} as the master problem
\begin{equation}
  \label{eq:secondary-s2s-Gamma} 
  \minimize_{y_i} H(y_i) = \sum_{j \in
    \mathcal{V}_i} H_{ij}(y_i) + \frac{\rho}{2} \|y_i - \gamma_{ii}\|^2,
\end{equation} 
where
\begin{equation}
  \label{eq:xi-function} 
  H_{ij}(y_i) = \min_{y_j} F_{ij}(y_i, y_j) +
  \frac{\rho}{2} \|y_j - \gamma_{ij}\|^2.
\end{equation}

We now state important properties of $H_{ij}$.

\begin{proposition}
  \label{prop:xi} 
  Define $H_{ij}$ as in~\eqref{eq:xi-function}. Then:
  
  \begin{enumerate}
  \item \label{item:optim-probl-uniq} Optimization
    problem~\eqref{eq:xi-function} has a unique solution $y_j$ for
    any given $y_i$, henceforth denoted $y_{j}^{\star}(y_{i})$;
  \item \label{item:xi-cvx-diff-grad}Function $H_{ij}$ is convex and
    differentiable, with gradient
    \begin{equation}
      \label{eq:xi-gradient} 
      \nabla H_{ij}(y_{i}) = \rho \left(y_j^\star(y_i) - \gamma_{ij} \right);
    \end{equation}
  \item \label{item:grad-lipsch}The gradient of $H_{ij}$ is Lipschitz continuous  with
    parameter $\rho$, \ie,
    \begin{equation*} 
      \|\nabla H_{ij}(u) - \nabla H_{ij}(v)\| \leq \rho \| u - v\|
    \end{equation*} for all $u, v \in \reals^{p}$.
  \end{enumerate}
\end{proposition}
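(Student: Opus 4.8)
The plan is to recognize $H_{ij}$ as a translated \emph{Moreau envelope} of the convex function $\Phi_{d_{ij}}(\cdot\,|\,v_{ij})$, with $v_{ij}=x_i[l]-x_j[l]$, and then read off all three items from the standard properties of Moreau--Yosida regularization. First I would record that $\Phi_{d_{ij}}(\cdot\,|\,v_{ij})$ is a proper, closed, convex function on $\reals^p$: convexity is Proposition~\ref{prop:majorization-properties}, it is finite and continuous everywhere as a pointwise maximum of two continuous functions (hence closed), and it is nonnegative since it majorizes $\phi_{d_{ij}}\ge 0$. Next, in~\eqref{eq:xi-function} I would substitute $s=y_i-y_j$, which for fixed $y_i$ is a bijection of $\reals^p$; using $F_{ij}(y_i,y_j)=\Phi_{d_{ij}}(y_i-y_j\,|\,v_{ij})$ this gives
\[
  H_{ij}(y_i)=\min_{s\in\reals^p}\ \Phi_{d_{ij}}(s\,|\,v_{ij})+\tfrac{\rho}{2}\bigl\|s-(y_i-\gamma_{ij})\bigr\|^2,
\]
which is exactly $M(y_i-\gamma_{ij})$, where $M(x)=\min_s \Phi_{d_{ij}}(s\,|\,v_{ij})+\tfrac{\rho}{2}\|s-x\|^2$ is the Moreau envelope of $\Phi_{d_{ij}}(\cdot\,|\,v_{ij})$ with parameter $1/\rho$, and the inner minimizer is the proximal point $s^\star(x)=\mathrm{prox}_{(1/\rho)\Phi_{d_{ij}}}(x)$, so that $y_j^\star(y_i)=y_i-s^\star(y_i-\gamma_{ij})$.

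From here the three items follow. The inner objective in $M$ is $\rho$-strongly convex in $s$ (the coupling quadratic is $\rho$-strongly convex, $\Phi_{d_{ij}}$ is convex), hence coercive with a unique minimizer; this gives item~\ref{item:optim-probl-uniq}. For item~\ref{item:xi-cvx-diff-grad}, the Moreau envelope of a closed proper convex function is itself convex and continuously differentiable, with $\nabla M(x)=\rho\bigl(x-\mathrm{prox}_{(1/\rho)\Phi_{d_{ij}}}(x)\bigr)$; since $y_i\mapsto y_i-\gamma_{ij}$ is an isometry, $H_{ij}$ inherits convexity and differentiability, and $\nabla H_{ij}(y_i)=\rho\bigl((y_i-\gamma_{ij})-(y_i-y_j^\star(y_i))\bigr)=\rho\bigl(y_j^\star(y_i)-\gamma_{ij}\bigr)$, as in~\eqref{eq:xi-gradient}. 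For item~\ref{item:grad-lipsch}, the gradient of a Moreau envelope with parameter $1/\rho$ is $\rho$-Lipschitz --- equivalently, $\nabla M=\rho(I-\mathrm{prox}_{(1/\rho)\Phi_{d_{ij}}})$ and $I-\mathrm{prox}$ is firmly nonexpansive, hence $1$-Lipschitz --- and this is again preserved under the isometric translation.

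The one point worth spelling out in the write-up is that $\Phi_{d_{ij}}(\cdot\,|\,v_{ij})$ is generally \emph{not} differentiable (it has a kink along the locus where $g_{d_{ij}}$ and the Huber term in~\eqref{eq:majorization-simple} coincide), yet partial minimization against the quadratic coupling --- i.e.\ Moreau regularization --- smooths it into a $C^1$ function with Lipschitz gradient; this is precisely the structural fact exploited by the Nesterov fast-gradient steps of the next section. If one prefers to avoid invoking Moreau-envelope theory, the same conclusions can be reached directly: (i) strong convexity for uniqueness, as above; (ii) convexity of $H_{ij}$ by partial minimization of the jointly convex map $(y_i,y_j)\mapsto F_{ij}(y_i,y_j)+\tfrac{\rho}{2}\|y_j-\gamma_{ij}\|^2$; (iii) continuity of $y_i\mapsto y_j^\star(y_i)$ (nonexpansiveness of the proximal map), and then pinning $\partial H_{ij}(y_i)$ down to the single vector $\rho(y_j^\star(y_i)-\gamma_{ij})$ using the optimality condition $-\rho(y_j^\star(y_i)-\gamma_{ij})\in\partial_{y_j}F_{ij}(y_i,y_j^\star(y_i))$ together with $\partial_{y_i}F_{ij}=-\partial_{y_j}F_{ij}$ (valid because $F_{ij}$ depends on $(y_i,y_j)$ only through $y_i-y_j$), the Lipschitz bound again coming from firm nonexpansiveness of the prox. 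I expect the bookkeeping in this self-contained route --- the continuity argument for $y_j^\star$ and the subdifferential manipulation through the linear change of variables --- to be the most delicate part, which is exactly why routing everything through the Moreau envelope is the approach I would take.
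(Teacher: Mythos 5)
Your proposal is correct and follows essentially the same route as the paper: both identify $H_{ij}$ as the Moreau--Yosida regularization (Moreau envelope) of $\Phi_{d_{ij}}(\cdot\,|\,v)$ composed with the translation $y_i\mapsto y_i-\gamma_{ij}$, via the change of variable $u=y_i-y_j$, and then read off uniqueness from strong convexity of the inner problem and the gradient formula and $\rho$-Lipschitz continuity from the standard envelope properties. Your added remarks (closedness of $\Phi$, firm nonexpansiveness of $I-\mathrm{prox}$) only make explicit what the paper delegates to its citation of Hiriart-Urruty and Lemar\'echal.
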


\begin{proof}
  \begin{enumerate}
  \item Recall from~\eqref{eq:maj-functions-expr} that $F_{ij}(y_i,
    y_j) = \Phi_d(y_i - y_j \, | \, v)$ where $d = d_{ij}$ and $v =
    x_{i}[l] - x_{j}[l]$.  We have $H_{ij}(y_i) = \Theta(y_i -
    \gamma_{ij})$ where
    \begin{equation}
      \label{eq:urruty-form} 
      \Theta(w) = \min_{u} \Phi_d(u \, | \, v ) + \frac{\rho}{2}
      \left\|u - w\right\|^2.
    \end{equation} 
    Moreover, $u^\star$ solves~\eqref{eq:urruty-form} if and only if
    $y_j^\star = y_i - u^\star$ solves~\eqref{eq:xi-function}. Now,
    the cost function in~\eqref{eq:urruty-form} is clearly continuous,
    coercive (\ie, it converges to $+\infty$ as $\left\| u \right\|
    \rightarrow +\infty$) and strictly convex, the two last properties
    arising from the quadratic term. Thus, it has an unique solution;
  \item The function $\Theta$ is the Moreau-Yosida regularization of
    the convex function $\Phi_d( \cdot |
    v)$~\cite[XI.3.4.4]{UrrutyMarechal1993}. As $\Theta$ is known to
    be convex and $H_{ij}$ is the composition of $\Theta$ with an
    affine map, $H_{ij}$ is convex.  It is also known that the
    gradient of $\Theta$ is
    \begin{equation*} 
      \nabla \Theta(w) = \rho(w - u^\star(w))
    \end{equation*} 
    where $u^\star(w)$ is the unique solution
    of~\eqref{eq:urruty-form} for a given~$w$.  Thus,
    \begin{eqnarray*}
      \label{eq:gradFj} 
      \nabla H_{ij}(y_{i}) & = & \nabla \Theta( y_i - \gamma_{ij} ) \\
      & = & \rho ( y_i - \gamma_{ij} - u^\star( y_i - \gamma_{ij} ) ).
    \end{eqnarray*} 
    Unwinding the change of variable, \ie, using $y_j^\star(y_i) = y_i
    - u^\star(y_i - \gamma_{ij})$, we obtain~\eqref{eq:xi-gradient};
  \item Follows from the well known fact that the gradient of $\Theta$
    is Lipschitz continuous with parameter $\rho$.
  \end{enumerate}
\end{proof}

As a consequence, we obtain several nice properties of the
function~$H$.

%%%% \Gamma
\begin{theorem}
  \label{th:gamma-properties} 
  Function~$H$ in~\eqref{eq:secondary-s2s-Gamma} is strongly convex
  with parameter~$\rho$, \ie, $H - \frac{\rho}{2} \left\| \cdot
  \right\|^2$ is convex.  Furthermore, it is differentiable with
  gradient
  \begin{equation}
    \label{eq:gradGamma} 
    \nabla H (y_i) = \rho\sum_{j \in \mathcal{V}_i}
    \left(y_j^\star(y_i) - \gamma_{ij} \right) + \rho (y_i - \gamma _{ii}).
  \end{equation} 
  The gradient of $H$ is Lipschitz continuous with parameter $L_H = \rho
  (|\mathcal{V}_i|+1)$.
 \end{theorem}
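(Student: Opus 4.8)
The plan is to derive every claimed property of $H$ by combining the corresponding property of each summand $H_{ij}$, already established in Proposition~\ref{prop:xi}, with the trivial properties of the quadratic term. Write $H(y_i) = \sum_{j \in \mathcal{V}_i} H_{ij}(y_i) + q(y_i)$, where $q(y_i) = \frac{\rho}{2}\|y_i - \gamma_{ii}\|^2$, and proceed property by property.

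For \emph{strong convexity}, I would observe that $H - \frac{\rho}{2}\|\cdot\|^2 = \sum_{j \in \mathcal{V}_i} H_{ij} + \bigl(q - \frac{\rho}{2}\|\cdot\|^2\bigr)$. Expanding the square, $q(y_i) - \frac{\rho}{2}\|y_i\|^2 = -\rho\,\gamma_{ii}^\top y_i + \frac{\rho}{2}\|\gamma_{ii}\|^2$ is affine, hence convex; and each $H_{ij}$ is convex by Proposition~\ref{prop:xi}(\ref{item:xi-cvx-diff-grad}). A finite sum of convex functions is convex, so $H - \frac{\rho}{2}\|\cdot\|^2$ is convex, which is precisely the asserted $\rho$-strong convexity of $H$.

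For \emph{differentiability and the gradient formula}, each $H_{ij}$ is differentiable with $\nabla H_{ij}(y_i) = \rho\bigl(y_j^\star(y_i) - \gamma_{ij}\bigr)$ by Proposition~\ref{prop:xi}(\ref{item:xi-cvx-diff-grad}), and $q$ is differentiable with $\nabla q(y_i) = \rho(y_i - \gamma_{ii})$. Since a finite sum of differentiable functions is differentiable and its gradient is the sum of the gradients, \eqref{eq:gradGamma} follows immediately. For the \emph{Lipschitz bound}, Proposition~\ref{prop:xi}(\ref{item:grad-lipsch}) gives that $y_i \mapsto \nabla H_{ij}(y_i)$ is $\rho$-Lipschitz, and $y_i \mapsto \nabla q(y_i)$ is $\rho$-Lipschitz as well; by the triangle inequality, $\|\nabla H(u) - \nabla H(v)\| \le \sum_{j \in \mathcal{V}_i}\|\nabla H_{ij}(u) - \nabla H_{ij}(v)\| + \|\nabla q(u) - \nabla q(v)\| \le \rho(|\mathcal{V}_i| + 1)\,\|u - v\|$, so $L_H = \rho(|\mathcal{V}_i| + 1)$.

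I do not anticipate any genuine obstacle: the theorem is a bookkeeping consequence of Proposition~\ref{prop:xi} together with the elementary fact that convexity, differentiability, and gradient-Lipschitz continuity are preserved under finite sums. The only points that deserve a moment's attention are recognizing that adding the bare quadratic $\frac{\rho}{2}\|\cdot\|^2$ upgrades a merely convex function to a $\rho$-strongly convex one (so the individually non-strongly-convex terms $H_{ij}$ do not spoil strong convexity of the sum), and noting that the Lipschitz constants of the gradients add under summation, yielding the factor $|\mathcal{V}_i| + 1$.
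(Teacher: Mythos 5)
Your proposal is correct and follows essentially the same route as the paper's proof: decompose $H$ into the $H_{ij}$ terms plus the quadratic, then invoke Proposition~\ref{prop:xi} summand by summand, with strong convexity coming from the quadratic term and the Lipschitz constants adding to give $\rho(|\mathcal{V}_i|+1)$. Your explicit expansion showing $q - \frac{\rho}{2}\|\cdot\|^2$ is affine is merely a slightly more detailed justification of a step the paper states in one line.
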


\begin{proof} 
  Since $H$ is a sum of convex functions, it is convex. It is strongly
  convex with parameter $\rho$ due to the presence of the strongly
  convex term $\frac{\rho}{2} \left\| y_i - \gamma_{ii} \right\|^2$.
  As a sum of differentiable functions, it is differentiable and the
  given formula for the gradient follows from
  proposition~\ref{prop:xi}.  Finally, since $H$ is the sum of $|
  {\mathcal V}_i | +1$ functions with Lipschitz continuous gradient with
  parameter $\rho$ , the claim is proved.
\end{proof}

The properties established in Theorem~\ref{th:gamma-properties} show
that the optimization problem~\eqref{eq:secondary-s2s-Gamma} is suited
for Nesterov's optimal method for the minimization of strongly convex
functions with Lipschitz continuous gradient~\cite[Theorem~2.2.3]{Nesterov2004}.
The resulting algorithm is outlined in Algorithm~\ref{alg:nesterov},
which is guaranteed to converge to the solution
of~\eqref{eq:secondary-s2s-Gamma}.
\begin{algorithm}
  \caption{Nesterov's optimal method
for~\eqref{eq:secondary-s2s-Gamma} }
  \label{alg:nesterov}
  \begin{algorithmic}[1] 
    \STATE $\hat y_i (0) = y_i (0)$ 
    \FOR{$s \geq 0$} 
    \STATE $y_i (s+1) = \hat y_i (s) - \frac{1}{L_H} \nabla H(y_i
    (s))$ 
    \STATE $\hat y_i (s+1) = y_i (s+1) + \frac{\sqrt{L_H} -
      \sqrt{\rho}}{\sqrt{L_H} + \sqrt{\rho}}(y_i (s+1) - y_i (s))$
    \ENDFOR
  \end{algorithmic}
\end{algorithm}

\subsection{Solving problem~\eqref{eq:xi-function}}
\label{sec:problem-xi-function}

It remains to show how to solve~\eqref{eq:xi-function} at a given
sensor node. Any off-the-shelf convex solver, e.g. based on
interior-point methods, could handle it. However, we present a simpler
method that avoids expensive matrix operations, typical of interior
point methods, by taking advantage of the problem structure at
hand. This is important in sensor networks where the sensors have
stringent computational resources.

 First, as shown in the proof of Proposition~\ref{prop:xi}, it suffices
to focus on solving~\eqref{eq:urruty-form} for a given~$w$:
solving~\eqref{eq:xi-function} amounts to solving~\eqref{eq:urruty-form}
for $w = y_i - \gamma_{ij}$ to obtain $u^\star = u^\star(w)$ and set
$y_j^\star(y_i) = y_i - u^\star$.

% Without loss of generality, we assume in the sequel that $v = (1,
% 0,\ldots, 0)$: first, 
Note from~\eqref{eq:majorization-simple} that
$\Phi_d( \cdot | v)$ only depends on $v / \left\| v \right\|$, so we
can assume, without loss of generality, that $\left\| v \right\| = 1$.
% ; now, let $Q$ be an
% orthogonal matrix whose first column is $v$ and change variables as $u
% \leftarrow Q^\top u$ to get a problem of the
% form~\eqref{eq:urruty-form} with $v = (1, 0, \ldots, 0)$.

From~{\eqref{eq:majorization-simple}, we see that
Problem~\eqref{eq:urruty-form} can be rewritten as
\begin{equation}
\label{step1}
\begin{array}{ll} 
  \minimize & r + \frac{\rho}{2} \left\|
    u - w \right\|^2 \\ 
  \mbox{subject to } & g_d( u ) \leq r \\ & h_d( v^\top u - d ) \leq r,
\end{array}
\end{equation} 
with optimization variable $(u, r)$. The Lagrange dual (c.f., for
example,~\cite{UrrutyMarechal1993}) of~\eqref{step1} is given by
\begin{equation}
\label{step2}
\begin{array}{ll} 
  \maximize & \psi(\omega) \\ 
  \mbox{subject to } & 0 \leq \omega \leq 1,
\end{array}
\end{equation} 
where $\psi(\omega) = \inf \{ \Psi( \omega, u )\, :\, u \in {\mathbb
  R}^n \}$ and
\begin{equation}
  \label{defvarphi} 
  \Psi(\omega, u) = \frac{\rho}{2} \left\| u - w
  \right\|^2 + \omega g_d ( u ) + ( 1 - \omega ) h_d( v^\top u - d).
\end{equation}

We propose to solve the dual problem~\eqref{step2}, which involves the
single variable~$\omega$, by bisection: we maintain an interval $[ a ,
b ] \subset [ 0 , 1 ]$ (initially, they coincide); we evaluate $\dot
\psi( c)$ at the midpoint $c = (a+b)/2$; if $\dot \psi(c) > 0$, we set
$a = c$, otherwise, $b = c$; the scheme is repeated until the
uncertainty interval is sufficiently small.

In order to make this approach work, we must prove first that the dual
function $\psi$ is indeed differentiable in the open interval $\Omega
= (0,1)$ and find a convenient formula for its derivative.  We will
need the following useful result from convex analysis.
\begin{lemma}
  \label{lem:diff-of-Psii} 
  Let $X \subset {\mathbb R}^n$ be an open convex set and $Y \subset
  \reals^{p}$ be a compact set. Let $F : X \times Y \rightarrow
  \reals$. Assume that $F( x, \cdot)$ is lower semi-continuous for all
  $x \in X$ and $F( \cdot, y )$ is concave and differentiable for all
  $y \in Y$. Let $f \, : \, X \rightarrow {\mathbb R}$, $f(x) = \inf
  \{ F( x, y)\, : \, y \in Y \}$. Assume that, for any $x \in X$, the
  infimum is attained at an unique $y^\star(x) \in Y$. Then, $f$ is
  differentiable everywhere and its gradient at $x \in X$ is given by
  \begin{equation}
    \label{eq:grad-of-psi-tilde} 
    \nabla f( x ) = \nabla F( x , y^{\star}(x))
  \end{equation} 
  where $\nabla$ refers to differentiation with respect to $x$.
\end{lemma}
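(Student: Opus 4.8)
The plan is to treat the statement as a Danskin/envelope-type theorem: writing $-f=\sup_{y\in Y}\bigl(-F(\cdot,y)\bigr)$, the claim is that a pointwise supremum of convex differentiable functions with a \emph{unique} active maximizer is differentiable, with gradient that of the active function. I would argue directly in the concave picture. First, the structural facts: $f$ is concave, being an infimum of concave functions; since $Y$ is compact and $F(x,\cdot)$ is lower semi-continuous, the infimum defining $f(x)$ is finite and attained; hence $f$ is a finite concave function on the open convex set $X$, so it is continuous on $X$ and its superdifferential $\partial f(x)$ is nonempty at every point.

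Second, I would prove the ``easy'' inclusion $\nabla F(x,y^{\star}(x))\in\partial f(x)$. Set $q=\nabla F(x,y^{\star}(x))$. For any $x'\in X$, the gradient inequality for the concave differentiable map $F(\cdot,y^{\star}(x))$ gives $F(x',y^{\star}(x))\le F(x,y^{\star}(x))+\langle q,\,x'-x\rangle$; combined with $f(x')\le F(x',y^{\star}(x))$ and $F(x,y^{\star}(x))=f(x)$, this yields $f(x')\le f(x)+\langle q,\,x'-x\rangle$, i.e.\ $q\in\partial f(x)$. Since a finite concave function on an open set is differentiable at a point exactly when its superdifferential there is a singleton, it remains to show $\partial f(x)=\{q\}$; it suffices to prove that the one-sided directional derivative satisfies $f'(x;v)=\langle q,v\rangle$ for every $v$, for then $\partial f(x)=\{p:\langle p,v\rangle\ge\langle q,v\rangle\ \forall v\}=\{q\}$. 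The bound $f'(x;v)\le\langle q,v\rangle$ is immediate from $f(x+tv)\le F(x+tv,y^{\star}(x))$ together with differentiability of $F(\cdot,y^{\star}(x))$ at $x$.

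The heart of the matter, and the step I expect to be the main obstacle, is the reverse bound $f'(x;v)\ge\langle q,v\rangle$. Fix $v$, and for small $t>0$ let $y_{t}$ be a minimizer of $F(x+tv,\cdot)$ over $Y$ (which exists by compactness of $Y$ and lower semi-continuity of $F(x+tv,\cdot)$). Using $f(x)=F(x,y^{\star}(x))\le F(x,y_{t})$ and the gradient inequality for the concave map $F(\cdot,y_{t})$ at the point $x+tv$,
\[
  \frac{f(x+tv)-f(x)}{t}\ \ge\ \frac{F(x+tv,y_{t})-F(x,y_{t})}{t}\ \ge\ \langle\nabla F(x+tv,y_{t}),v\rangle .
\]
I would then let $t\downarrow 0$ along a subsequence for which $y_{t}\to\bar y\in Y$, show that $\bar y=y^{\star}(x)$, and pass the gradients to the limit to conclude $f'(x;v)\ge\langle q,v\rangle$. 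To identify $\bar y$: continuity of $f$ gives $F(x+tv,y_{t})=f(x+tv)\to f(x)$, and controlling $F(x,y_{t})-F(x+tv,y_{t})$ near $x$ gives $F(x,y_{t})\to f(x)$ too; lower semi-continuity of $F(x,\cdot)$ then forces $F(x,\bar y)\le f(x)$, so $\bar y$ minimizes $F(x,\cdot)$ and the uniqueness hypothesis gives $\bar y=y^{\star}(x)$.

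The delicate points are (i) the convergence $y_{t}\to y^{\star}(x)$ and (ii) the passage $\nabla F(x+tv,y_{t})\to\nabla F(x,y^{\star}(x))$: both use the uniqueness of the minimizer crucially, and both require extracting enough uniform-in-$y$ regularity of the concave maps $F(\cdot,y)$ near $x$ (a common local Lipschitz constant and modulus of continuity for their gradients, using that a differentiable convex/concave function on an open set is automatically $C^{1}$ and that $F\ge f$ is locally bounded below) --- this uniform control is the real technical work. Once $f'(x;v)=\langle q,v\rangle$ is established for all $v$, differentiability of $f$ and the formula $\nabla f(x)=\nabla F(x,y^{\star}(x))$ follow immediately.
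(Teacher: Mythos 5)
The paper does not actually prove this lemma: it dispatches it in one line as a sign-flipped restatement of [Hiriart-Urruty \& Lemar\'echal, Cor.~VI.4.4.5], so any self-contained argument is by construction a different route. Your architecture is the standard Danskin/envelope proof and it is essentially the right one: concavity and continuity of $f$, the supergradient inclusion $\nabla F(x,y^{\star}(x))\in\partial f(x)$, reduction to $f'(x;v)=\langle q,v\rangle$, the trivial upper bound, and the lower bound via minimizers $y_{t}$ of $F(x+tv,\cdot)$. The identification $\bar y=y^{\star}(x)$ also goes through exactly as you describe, once you have a Lipschitz constant valid uniformly over the family $\{F(\cdot,y_{t})\}$ on a fixed neighborhood of $x$; and that uniform constant is genuinely obtainable from the two ingredients you name: $F(\cdot,y_{t})\ge f$ is locally bounded below because $f$ is continuous, $F(x+tv,y_{t})=f(x+tv)$ is bounded above, and the reflection inequality $g(x_{0})\ge\tfrac12 g(x')+\tfrac12 g(2x_{0}-x')$ for concave $g$ converts a lower bound on a ball plus an upper bound at its center into an upper bound on the ball, hence a uniform oscillation and Lipschitz bound.

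The one step that would fail as written is the final passage $\nabla F(x+tv,y_{t})\to\nabla F(x,y^{\star}(x))$ via ``a common modulus of continuity for the gradients.'' The hypotheses give no such uniformity: a family of differentiable concave functions with a common Lipschitz constant can have gradients whose moduli of continuity degenerate, and nothing ties $\nabla F(\cdot,y)$ continuously to $y$. Fortunately you do not need it. The uniform Lipschitz bound already makes $q_{t}=\nabla F(x+tv,y_{t})$ bounded; take any subsequential limit $q'$, write the supergradient inequality $F(x',y_{t})\le F(x+tv,y_{t})+\langle q_{t},\,x'-x-tv\rangle$ for fixed $x'$, and pass to the limit using lower semi-continuity of $F(x',\cdot)$ on the left and $F(x+tv,y_{t})=f(x+tv)\to f(x)=F(x,\bar y)$ on the right. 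This exhibits $q'$ as a supergradient of the differentiable concave function $F(\cdot,\bar y)$ at $x$, forcing $q'=\nabla F(x,y^{\star}(x))=q$; since every subsequential limit equals $q$, the whole family converges and $f'(x;v)\ge\langle q,v\rangle$ follows. With that repair in place of the modulus-of-continuity claim, your proof is complete.
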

  
\begin{proof} 
  This is essentially~\cite[VI.4.4.5]{UrrutyMarechal1993}, after one
  changes concave for convex, lower semi-continuous for upper
  semi-continuous and $\inf$ for $\sup$.
\end{proof}

Now, view $\Psi$ in~\eqref{defvarphi} as defined in $\Omega \times
{\mathbb R}^n$. Is is clear that $\Psi( \omega, \cdot )$ is lower
semi-continuous for all $\omega$ (in fact, continuous) and $\Psi(
\cdot, u )$ is concave (in fact, affine) and differentiable for all
$u$.  In fact, some even nicer properties hold.

\begin{lemma}
  \label{step3} 
  Let $\omega \in \Omega$.  The function $\Psi_\omega = \Psi( \omega,
  \cdot )$ is strongly convex with parameter $\rho$ and differentiable
  everywhere with gradient
  \begin{equation}
    \label{eq:ug}
    \nabla \Psi_\omega(u) = \rho (u - w) + 2 \omega ( u - \pi(u) ) +
    (1-\omega) \dot h_d( v^\top u - d ) v,
  \end{equation}
  where $\pi(u)$ denotes the projection of $u$ onto the closed ball of
  radius $d$ centered at the origin. Furthermore, the gradient of
  $\Psi_\omega$ is Lipschitz continuous with parameter $\rho + 2$.
\end{lemma}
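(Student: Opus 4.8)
The plan is to split $\Psi_\omega$ into the three summands of~\eqref{defvarphi} and analyze each separately, using that $\omega\in\Omega=(0,1)$ makes both weights $\omega$ and $1-\omega$ strictly positive. For \textbf{strong convexity}, the term $\frac{\rho}{2}\|u-w\|^2$ is strongly convex with parameter $\rho$; the term $g_d(u)=(\|u\|-d)_+^2$ equals $\operatorname{dist}(u,B_d)^2$, the squared Euclidean distance to the closed ball $B_d$ of radius $d$ centered at the origin, hence is convex (being the squared distance to a nonempty closed convex set); and $(1-\omega)h_d(v^\top u-d)$ is convex as the composition of the convex Huber function $h_d$ with an affine map. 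Adding nonnegative multiples of convex functions to a $\rho$-strongly convex function preserves $\rho$-strong convexity.

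For \textbf{differentiability and the gradient formula}, I compute the gradient of each summand. The first is $\rho(u-w)$. For $g_d=\operatorname{dist}(\cdot,B_d)^2$ I would invoke the standard fact (in the spirit of~\cite{UrrutyMarechal1993}) that the squared distance to a nonempty closed convex set $C$ is everywhere differentiable with gradient $2(u-P_C(u))$, where $P_C$ is the metric projection; specializing $C=B_d$ gives $\nabla g_d(u)=2(u-\pi(u))$. This is where the only delicate point lives, namely differentiability on the sphere $\|u\|=d$: the squared-distance viewpoint settles it cleanly, both one-sided expressions vanishing there and agreeing with $2(u-\pi(u))=0$. Finally $h_d$ is continuously differentiable on $\mathbb{R}$ (the two branch formulas, and their derivatives, match at $|r|=d$), so the chain rule gives $\nabla_u h_d(v^\top u-d)=\dot h_d(v^\top u-d)\,v$. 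Summing with weights $\omega$ and $1-\omega$ yields exactly~\eqref{eq:ug}, and differentiability of $\Psi_\omega$ follows since each summand is differentiable.

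For the \textbf{Lipschitz bound on $\nabla\Psi_\omega$}, I would bound each term. The map $u\mapsto\rho(u-w)$ is $\rho$-Lipschitz. The map $u\mapsto u-\pi(u)=(I-P_{B_d})(u)$ is nonexpansive because $I-P_C$ is firmly nonexpansive for every closed convex $C$; hence $\nabla g_d$ is $2$-Lipschitz and $\omega\nabla g_d$ is $2\omega$-Lipschitz. For the last term, $\dot h_d$ is $2$-Lipschitz on $\mathbb{R}$ (its slope is $2$ on $(-d,d)$ and $0$ outside), and since $\|v\|=1$,
\begin{equation*}
\bigl\|\dot h_d(v^\top u-d)\,v-\dot h_d(v^\top u'-d)\,v\bigr\|=\bigl|\dot h_d(v^\top u-d)-\dot h_d(v^\top u'-d)\bigr|\le 2\,|v^\top(u-u')|\le 2\,\|u-u'\|,
\end{equation*}
so $(1-\omega)$ times this gradient term is $2(1-\omega)$-Lipschitz. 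Adding the three constants gives $\rho+2\omega+2(1-\omega)=\rho+2$, as claimed.

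The proof involves no serious obstacle; the one step worth care is recognizing $g_d$ as a squared-distance-to-a-convex-set function, which simultaneously delivers the clean gradient formula (including on the sphere $\|u\|=d$) and the nonexpansiveness of $I-\pi$ needed for the Lipschitz constant. Everything else is routine bookkeeping of the Huber derivative and the quadratic term.
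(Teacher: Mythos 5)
Your proof is correct and follows essentially the same route as the paper's: both identify $g_d$ as the squared distance to the closed ball of radius $d$ (yielding convexity, the gradient $2(u-\pi(u))$, and the Lipschitz constant $2$), handle the Huber term via the $2$-Lipschitz derivative and Cauchy--Schwarz with $\|v\|=1$, and sum the three Lipschitz constants to get $\rho+2\omega+2(1-\omega)=\rho+2$. The only cosmetic difference is that you justify the $2$-Lipschitz bound on $\nabla g_d$ via firm nonexpansiveness of $I-P_C$, where the paper simply cites the standard reference.
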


\begin{proof} 
  We start by noting that $g_d$ in~\eqref{eq:squarepos} can be written
  as $g_d(u) = d^2_C(u)$ where $C$ is the closed ball with radius~$d$
  centered at the origin, and $d_C$ denotes the distance to the closed
  convex set~$C$. It is known that $g_d$ is convex, differentiable,
  the gradient is given by $\nabla g_d(u) = 2 (u - \pi(u) )$ and it is
  Lipschitz continuous with
  parameter~$2$~\cite[X.3.2.3]{UrrutyMarechal1993}. Also, function
  $h_d$ in~\eqref{eq:huber} is convex and differentiable. Thus, the
  function $\Psi_\omega$ is convex (resp. differentiable) as a sum of
  three convex (resp. differentiable) functions. It is strongly convex
  with parameter $\rho$ due to the first term $\frac{\rho}{2} \left\|
    \cdot - w \right\|^2$. The gradient in~\eqref{eq:ug} is
  clear. Finally, from $| \dot h_d(r) | - \dot h_d(s) | \leq 2 | r - s
  |$ for all $r, s$, there holds for any $u_1, u_2$, $$| \dot h_d(
  v^\top u_1 - d) - \dot h_d( v^\top u_2 - d ) | \leq 2 | v^\top (u_1
  - u_2) | \leq 2 \| u_1 - u_2 \|,$$ where $\| v \| = 1$ and the
  Cauchy-Schwarz inequality was used in the last step. We conclude
  from~\eqref{eq:ug} that, for any $u_1, u_2$, $$\| \nabla
  \Psi_\omega( u_1 ) - \nabla \Psi_\omega(u_2) \| \leq ( \rho + 2
  \omega + 2 (1-\omega) ) \left\| u_1 - u_2 \right\|,$$ \ie, the
  gradient of $\Psi_\omega$ is Lipschtz continuous with parameter~$\rho + 2$.
\end{proof}

Using Lemma~\ref{step3}, we see that the infimum of $\Psi_\omega$ is
attained at a single $u^\star(\omega)$ since it is a continuous,
strongly convex function. The derivative of $\psi$ in~\eqref{step2}
relies on $u^\star(\omega)$, as seen in Lemma~\ref{lem:psi-derivative}.

\begin{lemma}
  \label{lem:psi-derivative}
  Function $\psi$ in~\eqref{step2} is differentiable and its
  derivative is
  \begin{equation}
    \label{eq:psi-derivative}
    \dot \psi(\omega) = g_d\left( u^\star(\omega) \right)
  - h_d\left( v^\top u^\star(\omega) - d\right).
  \end{equation}
\end{lemma}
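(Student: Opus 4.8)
The plan is to obtain Lemma~\ref{lem:psi-derivative} as a direct application of the Danskin-type result in Lemma~\ref{lem:diff-of-Psii}, taking the parametrized family there to be $F(\omega,u)=\Psi(\omega,u)$ from~\eqref{defvarphi}. The difficulty is that Lemma~\ref{lem:diff-of-Psii} requires the minimization variable to range over a \emph{compact} set, whereas the infimum defining $\psi$ in~\eqref{step2} is taken over all of $\reals^n$. So the central step is to confine the minimizers $u^\star(\omega)$ to a fixed ball, uniformly in $\omega$; this is where essentially all the work lies.

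\textbf{Step 1 (uniform bound on minimizers).} Since $g_d\ge 0$, $h_d\ge 0$ and $\omega,1-\omega\in[0,1]$, we have $\Psi(\omega,u)\ge \frac{\rho}{2}\|u-w\|^2$ for every $\omega\in\Omega$ and every $u$. Evaluating at $u=w$ gives $\Psi(\omega,u^\star(\omega))\le\Psi(\omega,w)=\omega\, g_d(w)+(1-\omega)\,h_d(v^\top w-d)\le M$ with $M:=\max\{g_d(w),h_d(v^\top w-d)\}$ independent of $\omega$. (The unique minimizer $u^\star(\omega)$ exists by the strong convexity established in Lemma~\ref{step3}.) Combining the two bounds yields $\|u^\star(\omega)-w\|\le\sqrt{2M/\rho}$, so every minimizer lies in the fixed closed ball $Y:=\overline B(0,\kappa)$ with $\kappa:=\|w\|+\sqrt{2M/\rho}$, uniformly in $\omega\in\Omega$.

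\textbf{Step 2 (checking the hypotheses).} Fix an arbitrary $\omega_0\in\Omega$ and choose a compact subinterval $[\omega_1,\omega_2]\subset\Omega$ with $\omega_0\in(\omega_1,\omega_2)$; set $X:=(\omega_1,\omega_2)$ and take $Y$ as in Step 1. For each $\omega\in X$ the function $\Psi(\omega,\cdot)$ is continuous, hence lower semicontinuous, and since its global minimizer $u^\star(\omega)$ lies in $Y$ we have $\psi(\omega)=\inf_{u\in\reals^n}\Psi(\omega,u)=\inf_{u\in Y}\Psi(\omega,u)$, with the infimum over $Y$ attained at the unique point $u^\star(\omega)$. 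For fixed $u$, the map $\omega\mapsto\Psi(\omega,u)=\frac{\rho}{2}\|u-w\|^2+h_d(v^\top u-d)+\omega\bigl(g_d(u)-h_d(v^\top u-d)\bigr)$ is affine, hence concave and differentiable, with $\partial_\omega\Psi(\omega,u)=g_d(u)-h_d(v^\top u-d)$. Thus all hypotheses of Lemma~\ref{lem:diff-of-Psii} are met on $X\times Y$.

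\textbf{Step 3 (conclusion).} Lemma~\ref{lem:diff-of-Psii} then gives that $\psi$ is differentiable on $X$ with $\dot\psi(\omega)=\partial_\omega\Psi(\omega,u)\big|_{u=u^\star(\omega)}=g_d(u^\star(\omega))-h_d(v^\top u^\star(\omega)-d)$. Because $\omega_0\in\Omega$ was arbitrary and differentiability is a local property, $\psi$ is differentiable throughout $\Omega$ and formula~\eqref{eq:psi-derivative} holds. The main obstacle is Step 1 — the uniform confinement of the minimizers — which is needed precisely so that the compactness hypothesis of Lemma~\ref{lem:diff-of-Psii} can be invoked without changing the value of the infimum; the remaining steps are routine verifications.
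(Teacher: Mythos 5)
Your proof is correct and follows essentially the same route as the paper: bound $u^\star(\omega)$ uniformly in $\omega$ so that the infimum defining $\psi$ can be taken over a fixed compact set, then invoke Lemma~\ref{lem:diff-of-Psii}. The only difference is in the bounding step, where the paper extracts $\|u^\star(\omega)\| \leq \|w\| + 4d/\rho$ from the stationarity condition $\nabla \Psi_\omega(u^\star(\omega)) = 0$ via~\eqref{eq:ug}, whereas you use the equally valid (and arguably more elementary) sublevel-set comparison $\frac{\rho}{2}\|u^\star(\omega)-w\|^2 \leq \Psi(\omega,u^\star(\omega)) \leq \Psi(\omega,w)$.
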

\begin{proof}
  We begin by bounding the norm of $u^\star(\omega)$. From the
  necessary stationary condition $\nabla \Psi_\omega( u^\star(\omega)
  ) = 0$ and~\eqref{eq:ug} we conclude
  \begin{equation}
    ( \rho + 2 \omega) u^\star(\omega) = \rho w + 2 \omega
    \pi( u^\star(\omega) ) - (1-\omega) \dot h_d(v^\top u^\star(\omega) -
    d) v. \label{step4}
  \end{equation} 
  Since $| \dot h_d(t) | \leq 2 d$ for all $t$ (see~\eqref{eq:huber}),
  $\left\| \pi(u) \right\| \leq d$ for all $u$, $\left\| v \right\| =
  1$, and $0 \leq \omega \leq 1$, we can bound the norm of the
  right-hand side of~\eqref{step4} by $\rho \left\| w \right\| + 4
  d$. Thus,
  \begin{eqnarray*}
    \left\| u^\star( \omega ) \right\| & \leq
    & \frac{1}{\rho + 2 \omega} (\rho \left\| w \right\| + 4 d) \\ 
    & \leq & \frac{1}{\rho} (\rho \left\| w \right\|+ 4 d) \\ 
    & = & \left\| w \right\| + \frac{4 d }{\rho}. 
  \end{eqnarray*}
  Introduce the compact set $U = \{ u \in {\mathbb R}^n\, : \, \left\|
    u \right\| \leq \|w\| + 4 d / \rho \}$. The previous analysis has
  shown that the dual function in~\eqref{step2} can also be
  represented as $\psi(\omega) = \inf\{ \Psi(\omega, u)\,:\, u \in U
  \}$, i.e., we can restrict the search to~$U$ and view $\Psi$ as
  defined in $\Omega \times U$.  We can thus invoke
  Lemma~\ref{lem:diff-of-Psii} to conclude that $\psi$ is
  differentiable and~\eqref{eq:psi-derivative} holds.
\end{proof}

\subsubsection*{Finding $u^\star(\omega)$} 
To obtain $u^\star(\omega)$ we must minimize $\Psi_\omega$. But, given
its properties in Lemma~\ref{step3}, the simple optimal Nesterov
method, described in Algorithm~\ref{alg:nesterov}, is also applicable
here.

\subsection{Solving problem~\eqref{eq:secondary-s2a-nodei}}
\label{sec:problems-nodes-in-A_i} Note that node~$i$ stores $x_{i}(t)$
and $\mu_{ik}(t)$, $k \in {\mathcal A}_i$. Thus, it can indeed address
Problem~\eqref{eq:secondary-s2a-nodei}.
Problem~\eqref{eq:secondary-s2a-nodei} is similar (in fact, much
simpler) than~\eqref{eq:secondary-s2s-nodei}, and following the
previous steps leads to the same Nesterov's optimal method. We omit
this straightforward derivation.

\section{ADMM: Solving Problem~\eqref{eq:admm-primary-vars}}
\label{sec:solv-probl-primary-vars}

Looking at~\eqref{eq:auglagrangian}, it is clear that
Problem~\eqref{eq:admm-primary-vars} decouples across nodes also.
Furthermore, at node~$i$ a simple unconstrained quadratic problem with
respect to $x_i$ must be solved, whose closed-form solution is

\begin{IEEEeqnarray}{rCL} 
  \isdraft{}{\nonumber}
  x_i (t+1) & = & \frac{1}{ |\overline{\mathcal V}_i | + | {\mathcal
      A}_i |} \left( \sum_{j \in \overline{\mathcal V}_i } \left(
      \frac{1}{\rho} \lambda_{ji}(t) + y_{ji}(t+1) \right) \isdraft{}{\right.  \\
  & & \left.} + \sum_{k \in {\mathcal A}_i} \left( \frac{1}{\rho}
      \mu_{ik}(t) + z_{ik}(t+1) \right) \right).
  \label{eq:primary-closed-form}
 \end{IEEEeqnarray}
 
 For node~$i$ to carry this update, it needs first to receive
$y_{ji}(t+1)$ from its neighbors~$j \in {\mathcal V}_i$. This requires
a communication step.

\section{ADMM: Implementing~\eqref{eq:admm-dual-update}
and~\eqref{eq:admm-dual-update2}}

Recall that the dual variable $\lambda_{ji}$ is maintained at both
nodes $i$ and $j$.  Node~$i$ can carry the update $\lambda_{ji}(t+1)$
in~\eqref{eq:admm-dual-update}, for all $j \in {\mathcal V}_i$, since
the needed data are available (recall that $y_{ji}(t+1)$ is
available from the previous communication step).  To update
$\lambda_{ij}(t+1) = \lambda_{ij}(t) + \rho ( y_{ij}(t+1) - x_j(t+1)$,
node~$i$ needs to receive $x_j(t+1)$ from its neighbors $j \in
{\mathcal V}_i$. This requires a communication step.

\section{Summary of the distributed algorithm}
\label{sec:summary-algorithm}

Our ADMM-based algorithm currently stops after a fixed number of
iterations, denoted $T$.
\begin{algorithm}
  \caption{Step~\ref{alg:mm-min} of DCOOL-NET: position updates}
  \label{alg:dsnl}
  \begin{algorithmic}[1] 
    \REQUIRE $x[l]$
    \ENSURE $x[l+1]$
    \FOR{$t=0$ \TO $T-1$}
      \FOR{each node $i \in \mathcal{V}$ in parallel} 
   %     \STATE \COMMENT Solve Problem~\eqref{eq:secondary-s2s-nodei} to obtain
    %    $\{y_{ij}(t+1) : j \in \bar{\mathcal{V}}_{i}\}$
     %     \FOR{each $j \in \mathcal{V}_{i}$} 
     %     \STATE Get $u^{\star}(\omega)$ by minimizing
      %  $\Psi_{\omega}$ with
      %  Alg.~\ref{alg:nesterov} \label{alg:calc-ustar-nesterov}
      %    \STATE Get $y_{j}^{\star}(y_{i})$ by finding the root of
     %   $\dot{\psi}(\omega)$ in~\eqref{eq:psi-derivative}
      %    \ENDFOR
          \STATE Solve Problem~\eqref{eq:secondary-s2s-nodei} by
          minimizing $H$ in~\eqref{eq:secondary-s2s-Gamma} with Alg.~\ref{alg:nesterov} to obtain $y_{ij}(t+1)$, $j \in \overline{\mathcal V}_i$
          \FOR{$k=1$ \TO
            $|\mathcal{A}_i|$} \STATE Solve
          Problem~\eqref{eq:secondary-s2a-nodei} to obtain
          $z_{ik}(t+1)$ \label{alg:istep}
        \ENDFOR
        \STATE Send $y_{ij}(t+1)$ to neighbor $j \in
        \mathcal{V}_i$ \label{alg:send-secondary}
        \STATE Compute $x_i(t+1)$ from~\eqref{eq:primary-closed-form}
        \STATE Send $x_i(t+1)$ to all $j \in
        \mathcal{V}_i$ \label{alg:send-primary}
        \STATE Update $\{\lambda_{ji}(t+1),
         \mu_{ik}(t+1),\: j \in \overline{\mathcal{V}}_i, k \in {\mathcal
           A}_i \}$ as in~\eqref{eq:admm-dual-update}
         and~\eqref{eq:admm-dual-update2}
      \ENDFOR
    \ENDFOR
    \RETURN $x[l+1] = x(T)$
  \end{algorithmic}
\end{algorithm} 
Algorithm~\ref{alg:dsnl} outlines the procedure derived in
Secs.~\ref{sec:distr-algor},~\ref{sec:solv-probl-secondary-vars}
and~\ref{sec:solv-probl-primary-vars}, and corresponds to  step~\ref{alg:mm-min} of DCOOL-NET
(Algorithm~\ref{alg:mm}). Note that, in order to implement
step~\ref{alg:istep} of Algorithm~\ref{alg:dsnl}, one
must adapt Algorithm~\ref{alg:nesterov} to the problem at hand\footnote{In the spirit of reproducible research, all our MATLAB code will be made available online.}.

\subsection{Communication load}
\label{sec:communication-load} Algorithm~\ref{alg:dsnl} shows two
communication steps: step~\ref{alg:send-secondary} and
step~\ref{alg:send-primary}. At step~\ref{alg:send-secondary} each
node~$i$ sends $|\mathcal{V}_i|$ vectors in $\reals^p$, each to one
neighboring sensor, and at step~\ref{alg:send-primary} a vector in
$\reals^p$ is broadcast to all nodes in $\mathcal{V}_i$. This results
in $2 T L |\mathcal{V}_i|$ communications of $\reals^p$ vectors for
node~$i$ for the overall algorithm DCOOL-NET.  When comparing with SGO
in~\cite{ShiHeChenJiang2010}, for $T$ iterations, node~$i$ sends
$T|\mathcal{V}_{i}|$ vectors in $\reals^p$. The increase in
communications is the price to pay for the parallel nature of
DCOOL-NET.
%computations at each node. However, contrary to 
%SGO, nodes in DCOOL-NET don't need to wait for a full round over all
%network in order to compute their next estimate. They just wait
%locally for their neighbors' results.

\subsection{Initialization} We note that the tools introduced so
far could, in principle, also be used to set up a distributed
initialization scheme (\ie, to generate $x[0]$). More precisely, by
following the same steps leading to the ADMM formulation, a convex
function structured as~\eqref{eq:cvxapproximationFull} is amenable to
distributed optimization. For example, the ESOCP relaxation, used for
initialization in~\cite{ShiHeChenJiang2010} could fall into this
template. We leave a more in-depth exploration of this issue to future
work.

\section{Experimental results}
\label{sec:experimental-results}

\subsection{General setup}
\label{sec:experimental-setup}

Unless otherwise specified, the generated geometric networks are
composed by $4$ anchors and $50$ sensors, with an average node degree,
\ie, $\frac{1}{|\mathcal{V}|} \sum_{i \in \mathcal{V}}
|\mathcal{V}_{i}|$, of about $6$. In all experiments the sensors are
distributed at random and uniformly on a square of $1 \times 1$, and
anchors are placed, unless otherwise stated, at the four corners of
the unit square (to follow~\cite{ShiHeChenJiang2010}), namely, at
$(0,0)$,$(0,1)$, $(1,0)$ and $(1,1)$. These properties require a
communication range of about $R=0.24$.  Since localizability is an
issue when assessing the accuracy of sensor network localization
algorithms, the used networks are first checked to be generically
globally rigid, so that a small disturbance in measurements does not
create placement ambiguities. To detect generic global rigidity, we
used the methodologies
in\cite[Sec.~2]{AndersonShamesMaoFidan2010}. The results for the
proposed algorithm DCOOL-NET consider $L=40$ MM iterations, unless
otherwise stated.

\subsubsection*{Measurement noise}
\label{sec:measurement-noise} Throughout the experiments, the noise
model for range distance measurements is
\begin{equation}
  \label{eq:noise} 
  d_{ij} = \|x_{i}-x_{j}\|\cdot|n_{ij}|, \quad r_{ik} = \|x_{i} -
  a_{k}\|\cdot|n_{ik}|,
\end{equation}
where $n_{ij}, n_{ik} \sim \mathcal{N}(1,\sigma^{2})$ are independent
identically distributed Gaussian random variables with mean value $1$
and standard deviation $\sigma$ (specified ahead).  

This is the same model used in~\cite{ShiHeChenJiang2010}, against
which we compare our algorithm. Our choice of benchmark is motivated
by simulation results in~\cite{ShiHeChenJiang2010} showing that the
method is more accurate than the one proposed
in~\cite{SrirangarajanTewfikLuo2008}. As discussed in
Section~\ref{sec:problem-statement} and without loss of generality, we
assume $n_{ij}$ affects the edge measurement as a whole, \ie, both
$d_{ij}$ and $d_{ji}$ correspond to the same noisy measurement.

\subsubsection*{Initialization noise}
\label{sec:initialization-noise} 
To initialize the algorithms we take the true sensor
positions~$x^\star = \{ x_i^\star\, :\, i \in {\mathcal V} \}$ and we
perturb them by adding independent zero mean Gaussian noise, according
to
\begin{equation}
  \label{eq:init-noise} 
  x_{i}[0] = x_{i}^\star + \eta_{i},
\end{equation} 
where $\eta_{i} \sim \mathcal{N}(0,\sigma_{\mathrm{init}}^{2} I_p)$
and $I_p$ is the identity matrix of size $p\times p$. The parameter
$\sigma_{\mathrm{init}}$ is detailed ahead.

\subsubsection*{Accuracy measure}
\label{sec:accuracy-measure}

To quantify the precision of both algorithms we use Root Mean Square
Error (RMSE), defined as
\begin{equation}
  \label{eq:rmse} 
  \mathrm{RMSE} = \sqrt{\frac{1}{|\mathcal{V}| \mathrm{MC}}\sum_{m =
      1}^{\mathrm{MC}} \mathrm{SE}_m},
\end{equation} 
where 
\begin{equation} 
  \mathrm{SE}_m = \| \hat{x}_m - x^\star\|^{2} \label{eq:SEdef} 
\end{equation} 
denotes the network-wide squared error obtained at the $m$th Monte
Carlo trial, MC is the number of Monte Carlo trials and $\hat{x}_m$ is
the estimate of the sensors' positions at the $m$th Monte Carlo
trial. We emphasize that Equation~\eqref{eq:rmse} gives an accuracy
measure per sensor node in the network, and not the overall RMSE. This
is useful, not only to compare the performance between networks of
different sizes, but also to be able to extract a physical
interpretation of the quantity.

%Our computer simulations will consist mainly in studying RMSE
%vs. measurement noise intensity (\ie, $\sigma$ in~\eqref{eq:noise})
%and initialization noise intensity (\ie, $\sigma_{\mathrm{init}}$
%in~\eqref{eq:init-noise}), for the %various algorithms considered.

\subsection{Proposed and quadratic majorizers: RMSE vs. initialization
noise}
\label{sec:major-funct-comp} 
We compare the performance of our proposed majorizer
in~\eqref{eq:cvxapproximationFull} with a standard one built out of
quadratic functions, e.g., the one used
in~\cite{OguzGomesXavierOliveira2011}.  We have submitted a simple
source localization problem with one sensor and $4$ anchors to two MM
algorithms, each associated with one of the majorization
functions. They ran for a fixed number of $30$ iterations. At each
Monte Carlo trial, the true sensor positions were corrupted by zero mean
Gaussian noise, as in~\eqref{eq:init-noise}, with standard deviation
$\sigma_{\mathrm{init}} \in [0.01,1]$. The range measurements are
taken to be noiseless, \ie, $\sigma = 0$ in~\eqref{eq:noise}, in order
to create an idealized scenario for direct comparison of the two
approaches.
\begin{figure}[!t] 
  \centering
  \includegraphics[width=0.49\textwidth]{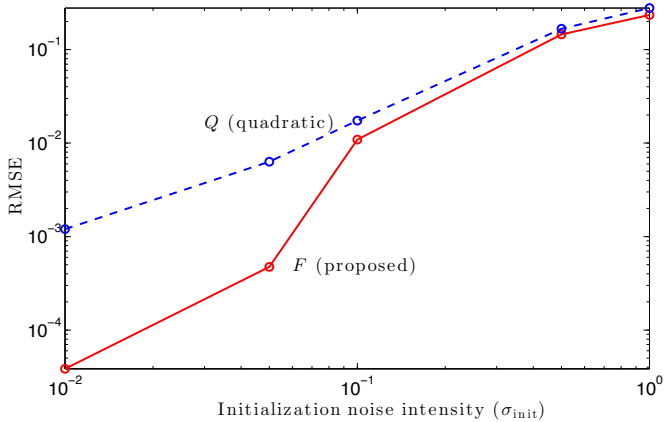}
  \caption{RMSE vs. $\sigma_{\mathrm{init}}$, the intensity of
    initialization noise in~\eqref{eq:init-noise}. The range
    measurements are noiseless: $\sigma = 0$
    in~\eqref{eq:noise}. Anchors are at the unit square corners. The
    proposed majorizer (red, solid) outperforms the quadratic
    majorizer (blue, dashed) in accuracy.}
  \label{fig:majcompinit}
\end{figure} 
The evolution of RMSE as a function of initialization noise
intensity is illustrated in Fig.~\ref{fig:majcompinit}. There is a
clear advantage of using this majorization function when the
initialization is within a radius of the true location which is $30\%$
of the square size.

\subsection{DCOOL-NET and SGO: RMSE vs. initialization noise}
\label{sec:depend-accur-with-init}

Two sets of experiments were made to compare the RMSE performance of
SGO in~\cite{ShiHeChenJiang2010} and the proposed DCOOL-NET, as a
function of the initialization quality (\ie, $\sigma_{\mathrm{init}}$
in~\eqref{eq:init-noise}).  In the first set, range measurements are
noiseless (\ie, $\sigma = 0$ in~\eqref{eq:noise}), whereas in the second
set we consider noisy range measurements ($\sigma > 0$).

\subsubsection{Noiseless range measurements}
\label{sec:sens-anch-cvx-hull-no-noise}

In this setup $300$ Monte Carlo trials were run. As the measurements
are accurate ($\sigma = 0$ in~\eqref{eq:noise}) one would expect not
only insignificant values of RMSE, but also a considerable agreement
between all the Monte Carlo trials on the solution for sufficiently
close initializations.
\begin{figure}[!t] 
  \centering
  \includegraphics[width=0.49\textwidth]{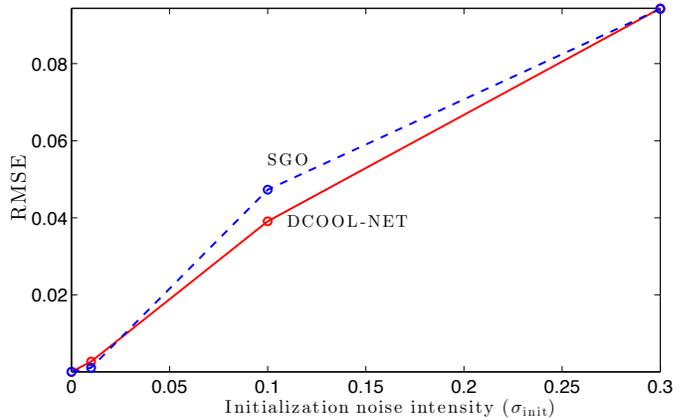}
  \caption{RMSE vs. $\sigma_{\mathrm{init}}$, the intensity of
    initialization noise in~\eqref{eq:init-noise}. The range measurements
    are noiseless: $\sigma = 0$ in~\eqref{eq:noise}. Anchors are at the
    unit square corners. Proposed DCOOL-NET (red, solid) and SGO (blue,
    dashed) attain comparable accuracy.}
  \label{fig:e1}
\end{figure} 
Fig.~\ref{fig:e1} confirms that both DCOOL-NET and SGO
achieve small error positions, and their accuracies are comparable. As
stated before, SGO also has a low computational complexity. In fact, lower than DCOOL-NET (although DCOOL-NET is fully parallel across nodes, whereas SGO operates by activating the nodes sequentially, implying some high-level coordination).
%Also, for reasonable initialization perturbation magnitudes (less or
% equal than $1\%$ of the placement area), the RMSE %is rather small
% (namely, $0.0011$ for SGO and $0.0027$ for $40$ MM %iterations of the
% proposed algorithm, attaining $0.0019$ for $100$ MM %iterations, not
% shown in the Figure).
\begin{table}[!t] 
  \centering
  \caption{Squared error dispersion over Monte Carlo trials for Fig.~
    \ref{fig:e1}.}
  \label{tab:e1} 
  \begin{tabular}{@{}lclcl@{}}
    \toprule
    \textbf{$\sigma_\mathrm{init}$}&\textbf{DCOOL-NET}&\textbf{SGO}\\\midrule
    0.01&0.0002&0.0007\\
    0.10&0.0638&0.1290\\
    0.30&0.2380&0.3400\\
    \bottomrule
  \end{tabular}
\end{table} 
Tab.~\ref{tab:e1} shows the squared error dispersion over all Monte
Carlo trials, \ie, the standard deviation of the data $\{ \mathrm{SE}_m\,:\, m = 1, \ldots, \mathrm{MC} \}$, recall~\eqref{eq:SEdef}, for both algorithms. We see that
DCOOL-NET exhibits a more stable performance, in the sense that it has
a lower squared error dispersion.

% %Our algorithm evidences a more close to the %mean behaviour, for all
% initialization perturbation magnitudes. For a %perturbation magnitude
% of $1\%$ of the sensor placement area, the SE %standard deviation of
% the proposed algorithm is $29.5\%$ of SGO's SE %standard
% deviation. This means more reliable position results, even if %the
% network is scarcely probed.

\subsubsection{Noisy range measurements}
\label{sec:sensors-with-noisy}

We set $\sigma = 0.12$ in the noise model~\eqref{eq:noise}.  
%In a first experiment testing sensitivity to initialization with added
%measurement noise, we maintained the network structure, where sensors
%ie on the convex hull of the anchors. Measurement noise, according to
%the noise model in~\eqref{eq:noise}, has a standard deviation of
\begin{figure}[!t] 
  \centering
  \includegraphics[width=0.49\textwidth]{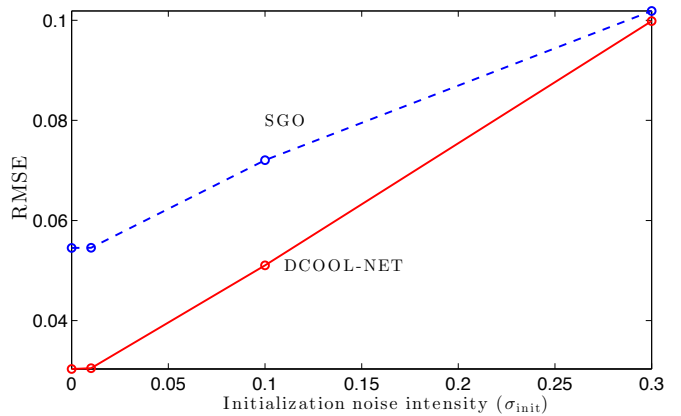}
  \caption{RMSE vs. $\sigma_{\mathrm{init}}$, the intensity of
    initialization noise in~\eqref{eq:init-noise}. The range
    measurements are noisy: $\sigma = 0.12$
    in~\eqref{eq:noise}. Anchors are at the unit square
    corners. Proposed DCOOL-NET (red, solid) outperforms SGO (blue,
    dashed) in accuracy.}
  \label{fig:e8}
\end{figure}
Fig.~\ref{fig:e8} shows that DCOOL-NET fares better than SGO: the gap
between the performances of both algorithms is now quite significant.
\begin{table}[!t] 
  \centering
  \caption{Squared error dispersion over Monte Carlo trials for
    Fig.~\ref{fig:e8}.}
  \label{tab:e8} 
  \begin{tabular}{@{}lclcl@{}}
    \toprule
    \textbf{$\sigma_\mathrm{init}$}&\textbf{DCOOL-NET}&\textbf{SGO}\\\midrule
    0.00&0.0118&0.0783\\
    0.01&0.0121&0.0775\\
    0.10&0.0727&0.1610\\
    0.30&0.2490&0.3320\\
    \bottomrule
  \end{tabular}
\end{table} 
The squared error dispersion over all Monte Carlo trials for both
algorithms is given in Tab.~\ref{tab:e8}. As before, we see that
DCOOL-NET is more reliable, in the sense that it exhibits lower variance of
estimates across Monte Carlo experiments.

% %The same trend can be observed in the dispersion of the SE around the
% %mean value through Monte Carlo trials (Tab.~\ref{tab:e8}), especially
% for an %initialization standard deviation $\sigma_{\mathrm{init}} \leq
% 0.1$ %where the SE standard deviation of the proposed algorithm is
% less than %$15,7\%$ of SGO's SE standard deviation, which confirms the
% %reliability of our method. The reason why $\sigma_{\mathrm{init}}
% \leq %0.1$ values lead to more accurate and more reliable estimates is
% due %to the fact that local algorithms --- like the present ones ---
% are %less prune to be caught in local minima when initialized closer
% to the %solution.

\begin{figure}[!t] 
  \centering
  \includegraphics[width=0.49\textwidth]{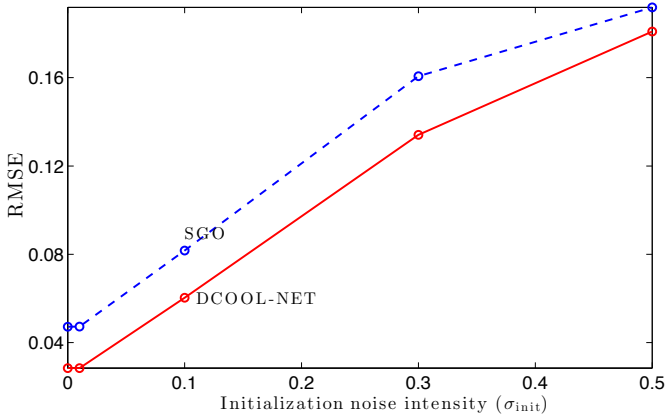}
  \caption{RMSE vs. $\sigma_{\mathrm{init}}$, the intensity of
    initialization noise in~\eqref{eq:init-noise}. The range measurements
    are noisy: $\sigma = 0.12$ in~\eqref{eq:noise}. Anchors were randomly
    placed in the unit square. Proposed DCOOL-NET (red, solid) outperforms
    SGO (blue, dashed) in accuracy.}
  \label{fig:e7}
\end{figure}

We also considered placing the anchors randomly within the unit
square, instead of at the corners. This is a more realistic and
challenging setup, where the sensors are no longer necessarily located
inside the convex hull of the anchors. The corresponding results are
shown in Fig.~\ref{fig:e7} and Tab.~\ref{tab:e7}, for $250$ Monte
Carlo trials.
% In a second experiment anchors were randomly placed. Still, our
% algorithm outperforms SGO both in RMSE, and in the dispersion of the
% squared error over Monte Carlo trials. Fig.~\ref{fig:e7} reflects
% the behavior of both algorithms in terms of accuracy, when sensors
% are %not in the convex hull of anchors. The proposed algorithm is
% still more precise than SGO. 
\begin{table}[!t] 
  \centering
  \caption{Squared error dispersion over Monte Carlo trials for Fig.~
    \ref{fig:e7}.}
  \label{tab:e7} 
\begin{tabular}{@{}lclcl@{}}
\toprule
\textbf{$\sigma_\mathrm{init}$}&\textbf{DCOOL-NET}&\textbf{SGO}\\\midrule
0.00&0.0097&0.0712\\
0.01&0.0099&0.0709\\
0.10&0.1550&0.3160\\
0.30&0.4350&0.8440\\
0.50&0.8330&1.3000\\
\bottomrule
\end{tabular}
\end{table} 
% DCOOL-NET achieves a better accuracy. Also, when comparing
% Tab.~\ref{tab:e7} with Tab.~\ref{tab:e8}, it is patent the growing gap
% in reliability, where SGO turns out clearly disadvantaged.
Again, DCOOL-NET achieves better accuracy. Comparing the dispersions
in Tabs.~\ref{tab:e8} and~\ref{tab:e7} also reveals that the gap in
reliability between SGO and our algorithm is now wider.

\subsection{DCOOL-NET and SGO: RMSE vs. measurement noise}
\label{sec:depend-accur-with-meas}

To evaluate the sensitivity of both algorithms to the intensity of
noise present in range measurements (\ie, $\sigma$
in~\eqref{eq:noise}), $300$ Monte Carlo trials were run for $\sigma =
0.01, 0.1, 0.12, 0.15, 0.17, 0.2, 0.3$. Both algorithms were
initialized at the true sensor positions, \ie, $\sigma_{\mathrm{init}}
= 0$ in~\eqref{eq:init-noise}, and DCOOL-NET performs $L = 100$
iterations\footnote{This is to guarantee that, in practice, DCOOL-NET
  indeed attained a fixed point, but the results barely changed for $L
  = 40$.}.
\begin{figure}[!t] 
  \centering
  \includegraphics[width=0.49\textwidth]{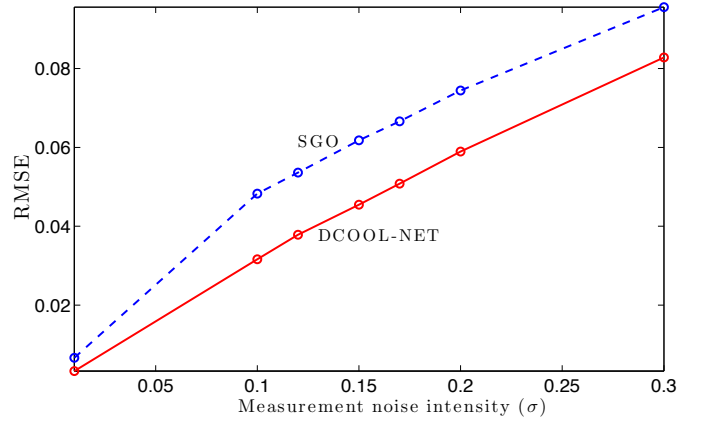}
  \caption{RMSE vs. $\sigma$, the intensity of measurement noise
    in~\eqref{eq:noise}. No initialization noise:
    $\sigma_{\mathrm{init}} = 0$ in~\eqref{eq:init-noise}. Anchors are
    at the unit square corners. Proposed DCOOL-NET (red, solid)
    outperforms SGO (blue, dashed) in accuracy.}
  \label{fig:e2}
\end{figure} 
Fig.~\ref{fig:e2} and Tab.~\ref{tab:e2} summarize the
computer simulations for this setup. As before, DCOOL-NET consistently
achieves better accuracy and stability.  
% depicts the experiment results confronting the RMSE 
% with the measurement noise standard deviation. Again we see that the 
% presented algorithm not only reaches the same accuracy level, but it
% also presents a consistently better performance.
\begin{table}[!t] 
  \centering
  \caption{Squared error dispersion over Monte Carlo trials for Fig.~
    \ref{fig:e2}.}
  \label{tab:e2} 
\begin{tabular}{@{}lclcl@{}}
\toprule
\textbf{$\sigma$}&\textbf{DCOOL-NET}&\textbf{SGO}\\\midrule
0.01&0.0002&0.0016\\
0.10&0.0177&0.0688\\
0.12&0.0218&0.0702\\
0.15&0.0326&0.0921\\
0.17&0.0394&0.0993\\
0.20&0.0525&0.1090\\
0.30&0.1020&0.1630\\
\bottomrule
\end{tabular}
\end{table} 
% Although accuracy is utmost important in applications,
% another impactful property of an algorithm for real life use is its
% reliability. As in preceding experiments, reliability is a major
% feature of the obtained results, as can be read in
% Tab.~\ref{tab:e2}. The present work exhibits a standard deviation
% over Monte Carlo trials which is $10.5\%$ of SGO's standard
% deviation for low power noise, and can reach, for the highest power
% noise, $62.4\%$ of the error presented by SGO.

\subsection{DCOOL-NET and SGO: RMSE vs. communication cost}
\label{sec:accur-numb-comm}

We assessed how the RMSE varies with the communication load incurred
by both algorithms.  We considered the general setup described in
Sec.~\ref{sec:experimental-setup}.  The results are displayed in
Fig.~\ref{fig:comm}. We see an interesting tradeoff: SGO converges
much quicker than DCOOL-NET (in terms of communication rounds), and
attains a lower RMSE sooner. However, DCOOL-NET can improve its
accuracy through more communications, whereas SGO remains trapped in a
suboptimal solution.

%As acknowledged in Sec.~\ref{sec:communication-load}, the price to
%pay for the distributed, parallel nature of DCOOL-NET is the increase
%in the communication load.  %Accuracy and reliability are not
%compromised, as we have seen in
%Sections~\ref{sec:depend-accur-with-init}
%and~\ref{sec:depend-accur-with-meas}. Now we proceed with the
%experimental analysis on this aspect.  %The algorithms are compared
%based on the number of communications %needed to reach a certain
%error ground. So, we study the evolution of %the RMSE through
%intervals of algorithm iterations, and compare the %RMSE value for
%the same communication amount.
\begin{figure}[!t] 
  \centering
  \includegraphics[width=0.49\textwidth]{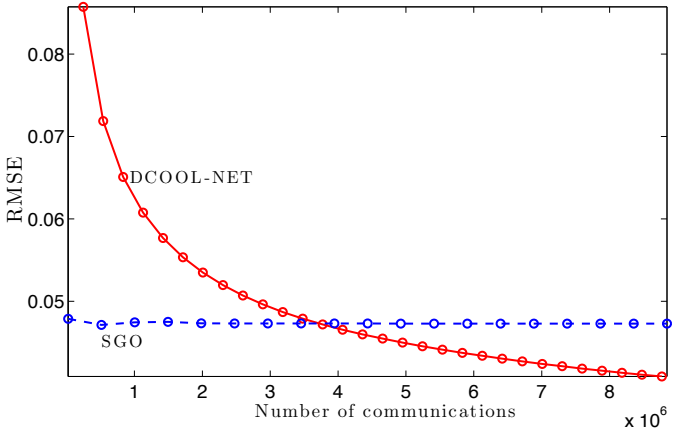}
  \caption{RMSE versus total number of two-dimensional vectors
    communicated in the network. The range measurements are noiseless:
    $\sigma = 0$ in~\eqref{eq:noise}. Initialization is noisy:
    $\sigma_{\mathrm{init}} = 0.1$ in~\eqref{eq:init-noise}. Anchors are
    at the unit square corners. Proposed DCOOL-NET (red, solid)
    outperforms SGO (blue, dashed) in accuracy, at the expense of more
    communications.}
  \label{fig:comm}
\end{figure} 

\subsection{DCOOL-NET: RMSE vs. parameter $\rho$}
\label{sec:sens-param-rho}

The parameter $\rho$ plays a role in the augmented Lagrangian
discussed in Sec.~\ref{sec:distr-algor}, and is user-selected.  As
such, it is important to study the sensitivity of DCOOL-NET to this
parameter choice.  For this purpose, we have tested several $\rho$
between $1$ and $200$. For each choice, $300$ Monte Carlo trials were
performed using noisy measurements and initializations.
\begin{figure}[!t] 
  \centering
  \includegraphics[width=0.49\textwidth]{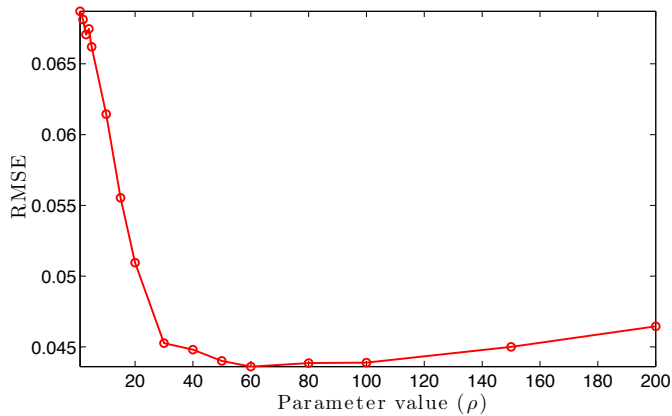}
  \caption{RMSE vs. $\rho$. The range measurements are noisy: $\sigma
    = 0.05$ in~\eqref{eq:noise}. Initialization is noisy:
    $\sigma_{\mathrm{init}}=0.1$ in~\eqref{eq:init-noise}. Anchors are the
    unit square corners. }
  \label{fig:rmsevsrho}
\end{figure} 
Fig.~\ref{fig:rmsevsrho} portrays RMSE against~$\rho$ for $L = 40$
iterations of DCOOL-NET.  There is no ample variation, especially for
values of $\rho$ over $30$, which offers some confidence in the
algorithm resilience to this parameter, a pivotal feature from the
practical standpoint. However, an analytical approach for selecting
the optimal~$\rho$ is beyond the scope of this work, and is postponed
for future research. Note that adaptive schemes to adjust~$\rho$ do
exist for centralized
settings, \eg,~\cite{BoydParikhChuPeleatoEckstein2011}, but seem impractical
for distributed setups as they require global computations.

\section{Conclusions}
\label{sec:conclusions}

Sensor network localization based on noisy range measurement between
some pairs of sensors is a current topic of great interest, which maps
into a difficult nonconvex optimization problem, once cast in a
maximum likelihood (ML) framework. Efficient algorithms guaranteed to
find the global minimum are not known even for the centralized
setting.  In this work, we proposed an algorithm, termed DCOOL-NET,
for the more challenging distributed setup in which no central or
fusion node is available: instead, neighbor nodes collaboratively
exchange messages in order to solve the underlying optimization
problem. DCOOL-NET stems from a majorization-minorization (MM)
approach and capitalizes on a novel convex majorizer. The proposed
majorizer is a main contribution of this work: it is tuned to the
nonconvexities of the cost function, which translates into better
performance when directly compared with traditional MM quadratic
majorizers. More importantly, the new majorizer exhibits several
important properties for the problem at hand: it allows for
distributed, parallel, optimization via the alternating direction
method of multipliers (ADMM) and for low-complexity solvers based on
fast-gradient Nesterov methods to tackle the ADMM subproblems at each
node. DCOOL-NET decreases the cost function at each iteration, a
property inherited from the MM framework, but it is not guaranteed to
find the global minimum (a theoretical limitation shared by all
existing distributed and non-distributed algorithms). However,
computer simulations show that DCOOL-NET achieve better sensor
positioning accuracy than a state-of-art method distributed algorithm
which, furthermore, is not parallel.

\appendices
\section{Proof of Proposition~\ref{prop:majorization-properties}}
\label{sec:proof-theor-thcvxmaj}

We write $\Phi_d(u)$ instead of $\Phi_d( u | v )$ and we let $\langle
x, y \rangle = x^\top y$.

\subsubsection*{Convexity} Note that $g_d$ is convex as the
composition of the convex, non-decreasing function $( \cdot )_+^2$
with the convex function $\left\| \cdot \right\| - d$. Also, $h_d(
\langle v / \| v \| , \cdot \rangle - d)$ is convex as the composition
of the convex Huber function $h_d( \cdot)$ with the affine map
$\langle v/ \| v \|, \cdot \rangle - d$. Finally, $\Phi_d$ is convex
as the pointwise maximum of two convex functions.

\subsubsection*{Tightness} It is straightforward to check that
$\phi_d(v) = \Phi_d(v)$ by examining separately the three cases $\|
v\| < d$, $d \leq \| v \| < 2d$ and $\| v \| \geq 2d$.

\subsubsection*{Majorization} We must show that $\Phi_d(u) \geq
\phi_d(u)$ for all $u$.  First, consider $\| u \| \geq d$. Then,
$g_d(u) = \phi_d(u)$ and it follows that $\Phi_d(v) = \max\{ g_d(u),
h_d( \langle v / \| v \|, u \rangle - d) \} \geq \phi_d(u)$.  Now,
consider $\| u \| < d$ and write $u = R \hat u$, where $R = \| u \| <
d$ and $\| \hat u \| = 1$. It is straightforward to check that, in
terms of $R$ and $\hat u$, we have $\phi_d(u) = ( R - d )^2$ and
$\Phi_d( u ) = h_d( R \langle \hat v, \hat u \rangle - d)$, where
$\hat v = v / \| v \|$. Thus, we must show that $h_d( R \langle \hat
v, \hat u \rangle - d) \geq (R - d)^2$. Motivated by the definition of
the Huber function~$h_d$ in two branches, we divide the analysis in
two cases.

Case 1: $| R \langle \hat v, \hat u \rangle - d | \leq d$. In this
case, $h_d( R \langle \hat v, \hat u \rangle - d) = ( R \langle \hat
v, \hat u \rangle - d)^2$. Noting that $| \langle \hat v, \hat u
\rangle | \leq 1$, there holds $$( R \langle \hat v, \hat u \rangle -
d)^2 \geq \inf\{ ( R z - d )^2\, : \, | z | \leq 1 \} = (R - d)^2,$$
where the fact that $R < d$ was used to compute the infimum over~$z$
(attained at $z = 1$).

Case 2: $| R \langle \hat v, \hat u \rangle - d | > d$. In this case,
$h_d( R \langle \hat v, \hat u \rangle - d) = 2 d | R \langle \hat v,
\hat u \rangle - d | - d^2$.  Thus, $$h_d( R \langle \hat v, \hat u
\rangle - d ) \geq d^2 \geq (d - R)^2,$$ where the last inequality
follows from $0 \leq R < d$.

\section{Proof of~\eqref{eq:auglagrangian-v2}}
\label{sec:proof-B}

We show how to rewrite~\eqref{eq:auglagrangian} as~\eqref{eq:auglagrangian-v2}.
First, note that $F(y, z)$ in~\eqref{eq:admmcost} can be rewritten as 
\begin{equation}
F(y, z) = \sum_i \sum_{j \in {\mathcal V}_i} F_{ij}( y_{ii}, y_{ij} ) + 2 \sum_i \sum_{k \in {\mathcal A}_i} F_{ik}( z_{ik} ).
\label{apB1}
\end{equation}
Here, we used the fact that $F_{ij}( y_{ji}, y_{jj} ) = F_{ji} ( y_{jj}, y_{ji} )$ which follows from $d_{ij} = d_{ji}$ and $\Phi_d( u | v ) = \Phi_d( -u | -v )$, see~\eqref{eq:majorization-simple}.
In addition, there holds 
\begin{IEEEeqnarray}{rCL} 
 & & \sum_i \sum_{j \in \overline{\mathcal V}_i} \lambda_{ji}^\top ( y_{ji} - x_i ) + \frac{\rho}{2} \left\| y_{ji} - x_i \right\|^2  \nonumber \\ & = & 
\sum_j \sum_{i \in \overline{\mathcal V}_j} \lambda_{ij}^\top ( y_{ij} - x_j ) + \frac{\rho}{2} \left\| y_{ij} - x_j \right\|^2 \nonumber \\ & = & 
\sum_i \sum_{j \in \overline{\mathcal V}_i} \lambda_{ij}^\top ( y_{ij} - x_j ) + \frac{\rho}{2} \left\| y_{ij} - x_j \right\|^2. \label{apB2}
 \end{IEEEeqnarray}
The first equality follows from interchanging $i$ with~$j$. The second equality follows from noting that $i \in \overline{\mathcal V}_j$ if and only if $j \in \overline{\mathcal V}_i$.
Using~\eqref{apB1} and~\eqref{apB2} in~\eqref{eq:auglagrangian} gives~\eqref{eq:auglagrangian-v2}.

\bibliographystyle{IEEEtran} \bibliography{IEEEabrv,biblos.bib}

\end{document}